\newtheorem{theorem}{Theorem}
\newtheorem{lemma}[theorem]{Lemma}
\newtheorem{proposition}[theorem]{Proposition}
\newtheorem{corollary}[theorem]{Corollary}
\theoremstyle{definition}
\newtheorem{definition}[theorem]{Definition}
\theoremstyle{remark}
\def\Mod{{\rm Mod}}
\def\Homeo{{\rm Homeo}}
\def\Sp{{\rm Sp}}
\def\Out{{\rm Out}}
\def\Aut{{\rm Aut}}
\def\GL{{\rm GL}}
\def\SL{{\rm SL}}
\def\Ker{{\rm Ker}}
\def\orb{{\rm orb}}
\def\Hom{{\rm Hom}}
\def\Cay{{\rm Cay}}
\def\modd{{\rm mod}}
\numberwithin{equation}{section}
\newcommand{\RNum}[1]{\uppercase\expandafter{\romannumeral #1\relax}}
\begin{document}

\title{On profinite rigidity of 4-dimensional Seifert manifolds}

\author{Jiming Ma}
\address{School of Mathematical Sciences, Fudan University, Shanghai, 200433, P. R. China}
\email{majiming@fudan.edu.cn}

\author{Zixi Wang*}
\address{School of Mathematical Sciences, Zhejiang Normal University, Zhejiang, 321004, P. R. China}

\email{zxwang22@zjnu.edu.cn}

\keywords{Seifert 4-manifolds, profinite completion.}

\subjclass[2010]{20E18, 57N16, 57M05.}

\date{Jun 12, 2022}

\thanks{Jiming Ma was partially supported by NSFC 12171092.}

\begin{abstract}
There are many results showing the connection and phenomenon between some low-dimensional manifolds with the profinite completions of their fundamental groups. We focus on some Seifert 4-manifolds about the extent of their profinite completion to detect one, giving classification of monodromies and conditions for them to be profinitely rigid.

\end{abstract}

\maketitle

\section{introduction}

It has been long focused that distinguishing finitely generated residually finite group $G$ by the collection of its finite quotients, or equivalently, by the \textit{profinite completion} $\widehat{G}$. We care about this problem especially for the fundamental groups of aspherical manifolds corresponding to their finite coverings.
Let $\mathscr{C}$ denote a certain collection of residually finite groups. We say a group $G\in\mathscr{C}$ is \textit{profinitely rigid in $\mathscr{C}$} if $G$ could be distinguished by $\widehat{G}$ in $\mathscr{C}$ up to isomorphism. Otherwise $G$ is called \textit{profinitely flexible} when the rigidity fails. When $\mathscr{C}$ is the collection of all finitely generated residually finite groups we say the profinite rigidity is in \textit{absolute sense}. Considering about free group $F_n$ of rank $n\ge2$, it's still open that whether $F_n$ is absolutely profinitely rigid \cite{Noskov:1979,Reid:2018}.

In 2018 ICM, Reid introduced the profinite rigidity problem under low-dimensional topological condition \cite{Reid:2018}.  Bridson, Conder and Reid showed that Fuchsian groups are profinitely rigid among all the lattices of connected Lie groups \cite{BCR:2015}.  For a 3-dimensional manifold with one of Thurston's geometry, its geometric type could be distinguished by the profinite completion of its fundamental group proved by Wilton and Zalesskii \cite{Wil:2017}. Bridson, McReynolds, Reid and Spitler showed that there exist arithmetic lattices in $PSL(2,\mathbb{C})$ which are profinitely rigid in the absolute sense \cite{BMRS:2018}. Jaikin-Zapirain proved that profinite completion detects fiberness of compact orientable aspherical 3-manifolds \cite{J:2020}. Bridson and Reid \cite{BrRe:2015}, Boileau and Friedl \cite{BoFr:2015}  independently proved that the figure-eight knot group could be distinguished by its fundamental group among all the knot groups. However there are examples of 3-manifolds which are not profinitely rigid. Funar \cite{Funar} gave infinite pairs of non-homeomorphic $\mathbb{S}ol^3$ manifolds with isomorphic profinite completions based on the work of Stebe \cite{Stabe:1972}. Hempel \cite{Hem:2014} showed that there are non-homeomorphic closed Seifert fibred spaces of geometry $\mathbb{H}^2\times\mathbb{E}$ with isomorphic profinite completions. Wilkes \cite{Wilk:2017} proved that the profinite rigidity holds in the 3-dimensional closed orientable Seifert fibred manifolds except for the case exhibited by Hempel.

Considering the 4-dimensional Thurston's geometries, Stover \cite{Sto:2019} showed the profinite flexibility of $\mathbb{H}_{\mathbb{C}}^2$-manifolds. Piwek, Popovi\'{c} and Wilkes proved that two 4-dimensional $\mathbb{E}^4$-orbifolds are profinitely rigid among themselves seeing their fundamental groups as 4-dimensional crystallographic groups \cite{PPW:2019}. We proved that most 4-dimensional geometries in the sense of Thurston could be distinguished by profinite completions except for $\mathbb{H}^4$ and $\mathbb{H}_{\mathbb{C}}^2$ \cite{MaWa:2022}. Ue connected eight types of these geometries with 4-dimensional Seifert manifolds and gave the concrete connection how the fibering types form certain geometric structure\cite{Ue:1990}. Then we focus on distinguishing 4-dimensional Seifert manifolds with their profinite completions. The first procedure is to modify the fiber presentation in a more simplified way.

\begin{theorem}\label{monodromy}
	Suppose $M$ is a 4-dimensional $\mathbb{H}^2\times\mathbb{E}^2$ manifold with non-trivial monodromy. Then there is always a set of generators of $\pi_1(M)$ such that the corresponding monodromies are all trivial except for one matrix.
\end{theorem}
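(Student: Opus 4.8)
The plan is to recast the statement in terms of the short exact sequence underlying the Seifert structure and then to absorb all of the monodromy into a single coset representative. By Ue's description of $\mathbb{H}^2\times\mathbb{E}^2$-manifolds, $M$ fibres over a hyperbolic $2$-orbifold $B$ with general fibre a $2$-torus, giving an extension
\[
1 \longrightarrow \mathbb{Z}^2 \longrightarrow \pi_1(M) \stackrel{\rho}{\longrightarrow} \pi_1^{\orb}(B) \longrightarrow 1,
\]
in which $\mathbb{Z}^2$ is the fibre subgroup and the monodromy is the homomorphism $\psi\colon\pi_1^{\orb}(B)\to\GL(2,\mathbb{Z})$ recording the conjugation action of $\pi_1(M)$ on $\mathbb{Z}^2$. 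The one structural input I would use is that, because $M$ carries the \emph{product} geometry, the image $F:=\psi(\pi_1^{\orb}(B))$ is finite and in fact \emph{cyclic}: it is conjugate in $\GL(2,\mathbb{Z})$ to one of the rotation subgroups of order $2,3,4,6$ of the Euclidean fibre. Write $F=\langle A_0\rangle$; the hypothesis of non-trivial monodromy says precisely that $A_0\neq I$.

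The next step is to reorganise the generators of the base group. Put $K=\Ker\psi\le\pi_1^{\orb}(B)$. Since $[\pi_1^{\orb}(B):K]=\abs{F}<\infty$, the group $K$ is finitely generated, so I may fix a finite generating set $k_1,\dots,k_r$ of $K$ together with some $\tau\in\pi_1^{\orb}(B)$ satisfying $\psi(\tau)=A_0$. Then $\{k_1,\dots,k_r,\tau\}$ generates $\pi_1^{\orb}(B)$: the subgroup it generates contains $K$ and surjects onto $\pi_1^{\orb}(B)/K\cong F=\langle A_0\rangle$, hence is the whole group. With respect to this generating set the monodromy sends every $k_i$ to $I$ and sends $\tau$ to $A_0$.

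Finally I would lift to $\pi_1(M)$. Choose a basis $e_1,e_2$ of the fibre $\mathbb{Z}^2$ and lifts $\widetilde{k}_1,\dots,\widetilde{k}_r,\widetilde{\tau}\in\pi_1(M)$ of $k_1,\dots,k_r,\tau$ along $\rho$. The set $S=\{e_1,e_2,\widetilde{k}_1,\dots,\widetilde{k}_r,\widetilde{\tau}\}$ generates $\pi_1(M)$: its image under $\rho$ generates $\pi_1^{\orb}(B)$ and it contains $\Ker\rho=\mathbb{Z}^2$, which forces $\langle S\rangle=\pi_1(M)$. The monodromy attached to a generator $g$ is the matrix of the conjugation action of $g$ on the fibre, i.e.\ $\psi(\rho(g))$; this equals $I$ for each $e_j$ (the fibre is abelian) and for each $\widetilde{k}_i$ (since $\rho(\widetilde{k}_i)=k_i\in\Ker\psi$), while it equals $A_0$ for $\widetilde{\tau}$. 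Thus all the monodromies associated to $S$ are trivial except for the single matrix $A_0$, which is the assertion. The one place the hypothesis genuinely enters is the cyclicity of $F$ in the first paragraph --- this is what makes a single twisting generator $\tau$ suffice, and it must be extracted from Ue's classification of the base orbifolds and monodromies compatible with $\mathbb{H}^2\times\mathbb{E}^2$ rather than from group theory alone (an abstract torus-Seifert $4$-manifold could carry, say, a Klein four-group of monodromy, and then no such generating set exists). I therefore expect pinning down this cyclicity, not the reorganisation, to be the main obstacle.

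If one wants the output to be an honest Seifert presentation of the fibration $M\to B$ rather than an unstructured generating set, I would instead build it from the $\abs{F}$-fold orbifold covering $\widetilde{B}\to B$ corresponding to $K$: the handle and cone-point generators of $\widetilde{B}$ (pulled up and completed with fibre coordinates) play the role of the $k_i$, while a lift of a deck transformation plays the role of $\tau$, and the bookkeeping of Seifert invariants and monodromy matrices proceeds exactly as above.
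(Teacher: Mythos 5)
Your reorganisation of generators is internally consistent, and you correctly isolate the one geometric input (that Ue's classification forces the monodromy image to be a finite cyclic group $\langle Q\rangle$ of order $2$, $3$, $4$ or $6$). But the generating set you produce is not the kind of generating set the theorem is about. In this paper ``the corresponding monodromies'' means the matrices $A_i,B_i$ attached to a \emph{standard} generating set $x_1,\dots,x_r,u_1,v_1,\dots,u_g,v_g$ of $\pi_1^{\orb}(B)$ satisfying the orbifold relation, i.e.\ the monodromy entries of a Seifert presentation; this is what feeds into the Seifert invariants, the obstruction class and the cohomological bookkeeping in the later sections. Your set $\{k_1,\dots,k_r,\tau\}$ (generators of $\Ker\psi$ plus one coset representative) satisfies no such relation --- $\Ker\psi$ is a higher-genus surface group of index $\abs{F}$ --- so the presentation of $\pi_1(M)$ it yields is not a Seifert presentation and carries no Seifert invariants. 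Read your way, the statement becomes essentially tautological, whereas the statement the paper needs is a normal form for the monodromy \emph{within} the class of Seifert presentations.

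That harder statement is exactly the step your argument skips. Since the monodromy image is abelian, $\psi$ factors through $H_1(\Sigma_g;\mathbb{Z}/n)$ with $n=\mathrm{ord}(Q)$, and a change of standard generators (an automorphism of $\pi_1(\Sigma_g)$, realised by a mapping class via Dehn--Nielsen--Baer) acts on the exponent vector $(a_1,b_1,\dots,a_g,b_g)\in(\mathbb{Z}/n)^{2g}$ through $\Sp(2g,\mathbb{Z}/n)$. The content of the theorem is that every $\Sp(2g,\mathbb{Z}/n)$-orbit contains a vector supported on a single coordinate, which the paper proves using Burkhardt's generators of $\Sp(4,\mathbb{Z})$ and their higher-genus analogues. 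Your closing remark about rebuilding a presentation from the $\abs{F}$-fold cover $\widetilde B\to B$ does not repair this: that cover has a different base orbifold, and generators of $\pi_1^{\orb}(\widetilde B)$ together with a deck transformation still do not form a standard generating set of $\pi_1^{\orb}(B)$. To complete the proof along the intended lines you must carry out the symplectic orbit computation.
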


When the monodromies are all trivial, we proved the profinite rigidity of $\widetilde{\SL_2}\times\mathbb{E}$ and gave the condition for  $\mathbb{H}^2\times\mathbb{E}^2$-manifolds to be profinitely isomorphic.

\begin{theorem}\label{Seifert}
	The closed orientable Seifert 4-manifolds with  $\widetilde{\SL_2}\times\mathbb{E}$ are profinitely rigid. For two closed orientable Seifert 4-manifolds $M$ and $N$ with $\mathbb{H}^2\times\mathbb{E}^2$-geometry of trivial monodromy, then $\widehat{{\pi_1(M)}}\cong\widehat{{\pi_1(N)}}$ if and only if there exists a profinite integer $k\in \mathbb{Z}$ which is related to the profinite isomorphism such that the Seifert invariants of $M$ and $N$ are $(m_i,a_i,b_i)$ and $(m_i,ka_i,kb_i)$ respectively.
\end{theorem}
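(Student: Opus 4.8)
The plan is to follow the template Wilkes established for Seifert $3$-manifolds, adding the extra Euclidean circle. By Theorem~\ref{monodromy} (in the $\mathbb{H}^2\times\mathbb{E}^2$ case) and the structure of Ue's Seifert fibration (in the $\widetilde{\SL_2}\times\mathbb{E}$ case, whose torus fibre is automatically central), $\pi_1(M)$ sits in a central extension
\[
1 \longrightarrow \mathbb{Z}^2 \longrightarrow \pi_1(M) \longrightarrow \pi_1^{\orb}(\mathcal{O}_M) \longrightarrow 1,
\]
where $\mathcal{O}_M$ is a closed hyperbolic $2$-orbifold, $\mathbb{Z}^2$ is the fibre subgroup, and the extension is classified by a rational Euler class $e(M)\in\mathbb{Q}^2$ encoded by the Seifert data $\{b;(m_i,a_i,b_i)\}$, with $e(M)\neq 0$ precisely for $\widetilde{\SL_2}\times\mathbb{E}$ and $e(M)=0$ precisely for $\mathbb{H}^2\times\mathbb{E}^2$. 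These groups are good — extensions of the good group $\pi_1^{\orb}(\mathcal{O}_M)$ by the good group $\mathbb{Z}^2$, with finite cohomology — so profinite completion is exact on the sequence, giving a central extension
\[
1 \longrightarrow \widehat{\mathbb{Z}}^2 \longrightarrow \widehat{\pi_1(M)} \longrightarrow \widehat{\pi_1^{\orb}(\mathcal{O}_M)} \longrightarrow 1.
\]
Since by \cite{MaWa:2022} the geometry (and triviality of the monodromy, seen through the rank of the centre) is a profinite invariant, any isomorphism $\Phi\colon\widehat{\pi_1(M)}\xrightarrow{\ \sim\ }\widehat{\pi_1(N)}$ keeps us inside the relevant class.

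The first step is to show that the fibre subgroup $\widehat{\mathbb{Z}}^2$ is canonical. As $\pi_1^{\orb}(\mathcal{O}_M)$ is non-elementary Fuchsian, its profinite completion is centre-free, so $\widehat{\mathbb{Z}}^2$ is exactly the centre of $\widehat{\pi_1(M)}$ and hence characteristic; alternatively one invokes the detection of the Seifert structure from \cite{MaWa:2022}. Thus $\Phi$ descends to an isomorphism $\widehat{\pi_1^{\orb}(\mathcal{O}_M)}\cong\widehat{\pi_1^{\orb}(\mathcal{O}_N)}$, and profinite rigidity of Fuchsian groups among themselves \cite{BCR:2015} forces $\mathcal{O}_M\cong\mathcal{O}_N$; in particular the genus and the multiset of cone orders $\{m_i\}$ agree, so after reindexing we may compare the Seifert data term by term.

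The second step is to extract the numerical constraint. Restricted to the fibre, $\Phi$ is a matrix $\gamma\in\GL_2(\widehat{\mathbb{Z}})$; the induced automorphism of $\widehat{\pi_1^{\orb}(\mathcal{O})}$, by the known description of outer automorphisms of profinite Fuchsian groups, sends each cone generator $x_i$ to a conjugate of $x_i^{\,u}$ for one fixed unit $u\in\widehat{\mathbb{Z}}^\times$ (Wilkes' congruence condition). Applying $\Phi$ to the singular-fibre relations $x_i^{m_i}=(a_i,b_i)$ and using $(a_i,b_i)\mapsto\gamma(a_i,b_i)$ on the fibre yields
\[
\gamma(a_i,b_i)\equiv u\,(a_i',b_i') \pmod{m_i}\quad(\forall i),\qquad \gamma\big(e(M)\big)=u\,e(N).
\]
Normalising the Seifert presentations (in particular killing the background term, which is possible exactly when the Euler class considered is zero) and absorbing the $\GL_2(\mathbb{Z})$-freedom in the choice of fibre basis, one reduces to the case where the residual action of $\gamma$ on the relevant vectors is multiplication by a single profinite unit; setting $k$ equal to $u^{-1}$ times that unit gives $(a_i',b_i')\equiv k(a_i,b_i)\pmod{m_i}$ together with $e(N)=k\,e(M)$, which is the relation in the statement.

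Finally, split into the two cases. For $\widetilde{\SL_2}\times\mathbb{E}$ the Euler classes are nonzero rational vectors with $e(N)=k\,e(M)$, so $k$ is a rational number that is also a profinite unit, hence $k=\pm1$; then the Seifert invariants of $M$ and $N$ agree up to orientation and $M\cong N$, which is profinite rigidity. For $\mathbb{H}^2\times\mathbb{E}^2$ with trivial monodromy, $e(M)=e(N)=0$, so no such constraint is imposed: the forward direction gives the stated relation, and conversely, given $k\in\widehat{\mathbb{Z}}^\times$ realising it, one builds $\Phi$ explicitly as multiplication by $k$ on the fibre $\widehat{\mathbb{Z}}^2$ together with $x_i\mapsto x_i f^{t_i}$ for suitable $t_i\in\widehat{\mathbb{Z}}^2$ (their existence guaranteed by $k(a_i,b_i)\equiv(a_i',b_i')\bmod m_i$), normalising the background term to zero so that $\sum t_i=0$ and all defining relations survive; since $k$ is a unit, $\Phi$ is an isomorphism. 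I expect the main obstacle to be reconciling the profinite $\GL_2(\widehat{\mathbb{Z}})$-ambiguity of the fibre with the discrete $\GL_2(\mathbb{Z})$-equivalence of Seifert invariants — i.e.\ determining exactly when the profinite matching data is realised by a homeomorphism — and this, together with establishing canonicity of the fibre subgroup and exactness of the profinite-completion sequence in the genuinely orbifold (torsion) setting, is where the argument must be carried out carefully rather than imported from the $3$-manifold literature.
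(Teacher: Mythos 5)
Your proposal follows essentially the same route as the paper: identify the canonical fibre $\widehat{\mathbb{Z}}^2$, use profinite rigidity of Fuchsian groups \cite{BCR:2015} to match the base orbifolds, show the induced map on $H^2(\widehat{Q},(\mathbb{Z}/t\mathbb{Z})^2)$ is multiplication by a single profinite scalar $\kappa$ after absorbing the $\GL_2(\widehat{\mathbb{Z}})$-ambiguity of the fibre, and then use the (nonzero, rational) Euler class to force $\kappa=\pm1$ in the $\widetilde{\SL_2}\times\mathbb{E}$ case while leaving $\kappa$ free when $e=0$. The only real differences are cosmetic --- you characterize the fibre as the centre rather than the nilradical, and read off the congruences directly from the singular-fibre relations instead of via the paper's explicit Cayley-graph resolution (Wilkes-style) and the auxiliary normalization $e(M)=e(N)=(0,c)$ of Theorem \ref{euler}.
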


It should be remarked that our result in Theorem \ref{Seifert} is overlapped with Piwek's work \cite{Piw} when the orbifold contains cone points more than $1$. This is an original work considering the tight gap of posting time.

\section{Profinite completions}

Now we introduce some elementary results of profinite groups based on the book of Ribes and Zalesskii \cite{RiZa:2004}.
\begin{definition}
	For a partially ordered set $I$, an \textit{inverse system} is a family of sets $\{X_i\}_{i\in I}$, and a family of maps $\phi_{ij}:X_i\rightarrow X_j$ whenever $i\leq j$ such that:
	
	(1) $\phi_{ii}=id_{X_i}$,
	
	(2) $\phi_{jk}\phi_{ij}=\phi_{ik}$ if $i\leq j\leq k$.
	
	Denoting the inverse system as $(X_i,\phi_{ij},I)$, its \textit{inverse limit} is defined as
	\begin{center}
		$\varprojlim X_i=\{(x_i)\in\prod_{i\in I} X_i|\ \phi_{ij}(x_i)=x_j \ if \ i\leq j\}$.
	\end{center}
	
\end{definition}

\begin{definition}
	Given an inverse system $(X_i,\phi_{ij},I)$ such that $X_i$ are finite groups and $\phi_{ij}$ are group homomorphisms, the resulting inverse limit $\varprojlim X_i$ is called a \textit{profinite group}. For a discrete group $G$, the collection $\mathcal{N}$ of its finite index normal subgroups could form an inverse system by declaring that $N_i\leq N_j$ whenever  $N_i,N_j\in\mathcal{N}$ and $N_i\subseteq N_j$. So the group homomorphisms are natural epimorphisms $\phi_{ij}:G/N_i\rightarrow G/N_j$. We call the inverse limit of the inverse system $(G/N_i,\phi_{ij},\mathcal{N})$  the \textit{profinite completion} of $G$ and denote it as $\widehat{G}$.
	
\end{definition}

There is another important property to introduce:

\begin{definition}
	A group $G$ is \textit{residually finite} if for every non-trivial $g\in G$, there exists some finite index normal subgroup $N\triangleleft G$ such that $g\notin N$.
\end{definition}

A group $G$ is residually finite means that $\bigcap_{N_i\in \mathcal{N}} N_i=1$, and $G$ is residually finite if and only if $\iota:G\rightarrow \widehat{G}$ is injective where $\iota$ is induced by the natural map $G\rightarrow G\slash N_i$ for every $N_i\in\mathcal{N}$.

Considering the discrete topology of the quotients $G/N_i$, the topology on $\widehat{G}$ could be induced as the sub-topology of the product topology on $\prod_{N_i\in\mathcal{N}} G/N_i$. Then the profinite completion $\widehat{G}$ is a compact, Hausdorff and totally disconnected group. An open subgroup $H$ of $\widehat{G}$ in the profinite topology means that $H$ is closed of finite index in the sense of topological groups, and $H$ is closed if and only if it is the intersection of all open subgroups of $\widehat{G}$ containing $H$.  On the other hand, when considering the profinite topology on $\widehat{G}$,  the connection between the discrete group and its profinite completion has been made:

\begin{proposition}\label{profinite1-1}\cite{RiZa:2004}
	If $G$ is a finitely generated residually finite group, then there is a one-to-one correspondence between the set $\mathcal{X}$ of subgroups of $G$ with finite index and the set $\mathcal{Y}$ of all open subgroups of $\widehat{G}$. Identifying $G$ with its image in $\widehat{G}$, the correspondence is given by:
	
	(1) for $H\in \mathcal{X}$, $H\longmapsto \overline{H}$,
	
	(2) for $Y\in \mathcal{Y}$, $Y\longmapsto Y\cap G$.
	
	Here $\overline{H}$ means the closure of $H$ under the profinite topology of $\widehat{G}$. Further more, if $K,H\in \mathcal{X}$ and $K\le H$ then $[H:K]=[\overline{H}:\overline{K}]$; $H$ is normal in $G$ if and only if $\overline{H}$ is normal in $\widehat{G}$; and finally, for $H\in \mathcal{X}$, $\overline{H}\cong\widehat{H}$.
\end{proposition}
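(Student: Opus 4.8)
The plan is to reduce every assertion to two structural facts about the inverse limit $\widehat{G}=\varprojlim G/N_i$ and then propagate them by density. First I would record the foundational observations. Since $G$ is residually finite, the map $\iota\colon G\to\widehat{G}$ is injective with dense image (the image surjects onto each finite quotient $G/N_i$), so I identify $G$ with $\iota(G)$ and treat it as a dense subgroup. Second, a subgroup of $\widehat{G}$ is open if and only if it is closed of finite index; this is immediate from the fact that the kernels of the projections $\widehat{G}\to G/N_i$ are open normal subgroups forming a neighbourhood basis of the identity.

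Next I would verify that the two maps are well defined. For $H\in\mathcal{X}$, finite index forces $H$ to contain its normal core $N=\bigcap_g gHg^{-1}$, a finite-index normal subgroup lying in $\mathcal{N}$. Its closure $\overline{N}$ is exactly the kernel of $\widehat{G}\to G/N$, hence open, so $\overline{H}\supseteq\overline{N}$ is open and $\overline{H}\in\mathcal{Y}$. Conversely, for an open $Y\in\mathcal{Y}$, density of $G$ makes the natural map of coset spaces $G/(Y\cap G)\to\widehat{G}/Y$ surjective, and it is visibly injective, so $[G:Y\cap G]=[\widehat{G}:Y]<\infty$ and $Y\cap G\in\mathcal{X}$.

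The core of the argument is that these maps are mutually inverse. For $\overline{Y\cap G}=Y$: because $Y$ is open (hence closed) and $G$ is dense, $Y\cap G$ is dense in $Y$, giving the claim. For $\overline{H}\cap G=H$: the inclusion $H\subseteq\overline{H}\cap G$ is clear, and the reverse uses that $H$ is closed in the profinite topology on $G$ — this is where finite index is essential, since $H\supseteq N\in\mathcal{N}$ exhibits $H$ as a finite union of cosets of an open subgroup, hence closed. With both identities in hand the two maps are inverse bijections.

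Finally the three supplementary properties follow from the same density principle. For $[H:K]=[\overline{H}:\overline{K}]$ with $K\le H$, I would show the finitely many cosets $h_iK$ of $K$ in $H$ have closures $h_i\overline{K}$ that partition $\overline{H}$: disjointness follows because $h_i^{-1}h_j\in\overline{K}\cap G=K$, and covering follows since $\bigcup h_i\overline{K}$ is closed and contains the dense subset $H$; thus the two indices agree. Normality transfers because conjugation on $\widehat{G}$ is continuous, whence $g\overline{H}g^{-1}=\overline{gHg^{-1}}$, and normality is then read off on the dense subgroup $G$. For $\overline{H}\cong\widehat{H}$, I would observe that any finite-index subgroup $U$ of $H$ satisfies $[G:U]=[G:H][H:U]<\infty$, so its core in $G$ belongs to $\mathcal{N}$; this shows the subspace topology that $\overline{H}$ inherits from $\widehat{G}$ coincides with the full profinite topology of $H$, so the canonical continuous surjection $\widehat{H}\to\overline{H}$ is an isomorphism. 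The hard part will be precisely this last identification: one must rule out finite-index subgroups of $H$ that are invisible to the profinite topology of $G$, and it is exactly the finiteness of $[G:H]$ that forces the required cores to exist.
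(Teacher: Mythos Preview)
Your argument is correct and is essentially the standard proof found in Ribes--Zalesskii. Note, however, that the paper does not supply its own proof of this proposition: it is stated as a cited result from \cite{RiZa:2004} and used without further justification. So there is no in-paper proof to compare against; your write-up simply fills in what the authors defer to the reference, and does so along the expected lines (density of $G$ in $\widehat{G}$, openness of closures of finite-index subgroups via normal cores, and coincidence of the induced and intrinsic profinite topologies on $H$).
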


When we consider the finite quotients of $\widehat{G}$ for which the quotient maps are continuous, it is clear that $G$ and $\widehat{G}$ have the same set of finite quotients. Let $\mathcal{C} (G)$ denote the collection of finite quotients of $G$, then we have the following theorem proved in \cite{RiZa:2004}:

\begin{theorem}
	Let $G_1$ and $G_2$ be two finitely generated abstract groups, then $\widehat{G_1}\cong \widehat{G_2}$ if and only if $\mathcal{C} (G_1)=\mathcal{C} (G_2)$.
\end{theorem}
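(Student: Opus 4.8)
One direction is essentially free from what has been assembled above: if $\alpha\colon\widehat{G_1}\xrightarrow{\sim}\widehat{G_2}$ is an isomorphism of topological groups, then $\alpha$ carries open normal subgroups to open normal subgroups and respects the associated quotients, and by Proposition~\ref{profinite1-1} these quotients are precisely the finite quotients of $G_1$, respectively of $G_2$; hence $\mathcal C(G_1)=\mathcal C(G_2)$.

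For the converse the plan is to exhibit each $\widehat{G_i}$ as the inverse limit of a canonical $\mathbb N$-indexed tower of finite groups that can be reconstructed from $\mathcal C(G_i)$ alone. For a finitely generated group $G$ and $n\ge1$ there are only finitely many subgroups of index at most $n$, so $K_n(G):=\bigcap\{N\trianglelefteq G:[G:N]\le n\}$ has finite index; the $K_n(G)$ form a descending chain that is cofinal among finite-index normal subgroups (any $N$ contains $K_{[G:N]}(G)$), so $\widehat G\cong\varprojlim_n G/K_n(G)$. I would then prove the central claim: for each fixed $n$, the isomorphism type of the finite group $G/K_n(G)$, \emph{and} the bonding epimorphism $G/K_{n+1}(G)\twoheadrightarrow G/K_n(G)$, depend only on $\mathcal C(G)$. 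Writing $k_n(R):=\bigcap\{M\trianglelefteq R:[R:M]\le n\}$ for a finite group $R$, the point is that $G/K_n(G)$ is the largest member of $\{R\in\mathcal C(G):k_n(R)=1\}$ under the order ``$R'$ is a quotient of $R$'' — such a largest element exists and is unique because any epimorphism from $G$ onto a finite group $R$ with $k_n(R)=1$ automatically kills $K_n(G)$ — and the bonding map is, after the evident identification, the canonical projection of $G/K_{n+1}(G)$ onto its quotient by the characteristic subgroup $k_n\bigl(G/K_{n+1}(G)\bigr)$.

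Granting this, if $\mathcal C(G_1)=\mathcal C(G_2)$ then $G_1/K_n(G_1)\cong G_2/K_n(G_2)$ for every $n$, so the sets $X_n:=\mathrm{Isom}\bigl(G_1/K_n(G_1),G_2/K_n(G_2)\bigr)$ are finite and non-empty, and since $k_n(\cdot)$ is preserved by isomorphisms, reduction modulo $k_n$ gives compatible maps $X_{n+1}\to X_n$. A standard compactness argument then yields $\varprojlim_n X_n\neq\varnothing$, and any point of this limit is a coherent family of isomorphisms intertwining the bonding maps, hence passes to the limit to give $\widehat{G_1}\cong\widehat{G_2}$.

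The one place that requires genuine thought is the central claim of the second paragraph — recognising the characteristic quotients $G/K_n(G)$ together with their bonding maps inside the bare list of finite groups $\mathcal C(G)$; the cofinality and the compactness step are then routine, and finite generation enters exactly twice, to make $K_n(G)$ of finite index and to make the sets $X_n$ finite. This reproves the classical theorem recorded in \cite{RiZa:2004} (originally due to Dixon, Formanek, Poland and Ribes).
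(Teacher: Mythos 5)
Your argument is correct: it is precisely the classical Dixon--Formanek--Poland--Ribes proof, reconstructing $\widehat{G}$ as $\varprojlim_n G/K_n(G)$ from the cofinal chain $K_n(G)=\bigcap\{N\trianglelefteq G:[G:N]\le n\}$ and recognising each $G/K_n(G)$, together with its bonding map, inside $\mathcal{C}(G)$ exactly as you describe. The paper gives no proof of this statement --- it simply quotes it from \cite{RiZa:2004}, where the recorded argument is the same as yours --- and the only point worth making explicit is that the isomorphism $\widehat{G_1}\cong\widehat{G_2}$ in the forward direction is taken to be topological (harmless, and in fact automatic for finitely generated profinite groups by Nikolov--Segal \cite{NS:2006}).
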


The homology and cohomology of one's profinite completion maintain many features from the discrete group. Serre defined goodness in \cite{Se:2013}:

\begin{definition}
	A finitely generated group $G$ is \textit{good} if for any finite $G$-module $A$, the natural homomorphism of cohomology groups $H^{n}(\widehat{G};A) \longrightarrow H^{n}(G;A) $ induced by $\iota :G\longrightarrow \widehat{G}$ is an isomorphism for any dimension $n$.
\end{definition}

There are some examples for illustration. It is shown that the finitely generated Fuchsian groups are good \cite{GJZZ:2008}, and the extension of a good group by another good group is itself good. It is one application of goodness as follows \cite{GJZZ:2008}:

\begin{lemma}\label{extension}
	Let G be a residually finite, good group and $\phi:H\rightarrow G$ be a surjective homomorphism with residually finite and finitely generated kernel $K$. Then $H$ is residually finite.
\end{lemma}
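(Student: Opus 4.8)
The plan is to verify residual finiteness of $H$ directly, by showing that each non-trivial $h\in H$ lies outside a finite-index subgroup. I use throughout the elementary fact that if $S\le T$ has finite index and $1\neq s\in S$ lies outside a finite-index subgroup of $S$, then $s$ lies outside a finite-index normal subgroup of $T$ --- namely the normal core in $T$ of that subgroup, which is still of finite index because a finite-index subgroup has only finitely many conjugates. So it suffices to detect $h$ in a finite quotient of a finite-index subgroup of $H$. If $h\notin K$ then $\phi(h)\neq1$ in $G$, and residual finiteness of $G$ finishes it. So assume $1\neq h\in K$. By residual finiteness of $K$ pick a finite-index subgroup $L\le K$ with $h\notin L$; since $K$ is finitely generated it has only finitely many subgroups of index $[K:L]$, so their intersection $L_0$ is a \emph{characteristic} finite-index subgroup of $K$ with $h\notin L_0$. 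As $K\triangleleft H$, conjugation by $H$ induces automorphisms of $K$, hence $L_0\triangleleft H$, and we are reduced to detecting the non-trivial image $\bar h$ of $h$ in a finite quotient of
\[
1\longrightarrow F\longrightarrow E:=H/L_0\longrightarrow G\longrightarrow 1,
\]
where $F=K/L_0$ is finite.

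I would then split on whether $\bar h$ is central in $F$. If $\bar h\notin Z(F)$, the conjugation action $E\to\Aut(F)$ is a homomorphism onto a finite group sending $\bar h$ to a non-trivial inner automorphism, and we are done. If $\bar h\in Z(F)$, set $C:=C_E(F)$; then $[E:C]\le|\Aut(F)|<\infty$, the subgroup $C\cap F=Z(F)$ is a finite abelian central subgroup of $C$, and $C/Z(F)\cong CF/F$ has finite index in $E/F=G$. Since finite-index subgroups of good groups are good (cf.\ \cite{Se:2013}) and finite-index subgroups of residually finite groups are residually finite, $Q:=C/Z(F)$ is finitely generated, residually finite and good; thus $C$ is a central extension of such a $Q$ by the finite abelian group $Z(F)$, and $\bar h\in Z(F)\le C$.

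The heart of the matter is the claim, applied to $C$, that \emph{a central extension $1\to A\to D\to Q\to1$ with $A$ finite abelian and $Q$ finitely generated, residually finite and good is itself residually finite}. Such a $D$ is classified by a class $\alpha\in H^2(Q;A)$ (with trivial $Q$-action on $A$). Goodness of $Q$ gives an isomorphism $H^2(\widehat Q;A)\xrightarrow{\ \sim\ }H^2(Q;A)$; for any profinite group one has $H^2(\widehat Q;A)=\varinjlim_U H^2(\widehat Q/U;A)$ over open normal $U\le\widehat Q$, with structure maps the inflations, and the $\widehat Q/U$ are precisely the finite quotients of $Q$. Hence $\alpha$ is the inflation of some $\alpha_0\in H^2(Q/N;A)$ for a finite-index normal $N\triangleleft Q$. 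Since inflation corresponds to pullback of extensions, $D$ is the pullback along $Q\to Q/N$ of a \emph{finite} central extension $1\to A\to P\to Q/N\to1$, so the canonical map $D\to P$ is the identity on $A$; in particular every non-trivial element of $A$ survives in the finite quotient $P$ of $D$. Applied to $C$ this detects $\bar h$ in a finite quotient of $C$, and together with the normal-core reduction and the first case we conclude that $H$ is residually finite.

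The main obstacle is exactly this last claim, and it is where goodness is indispensable: a finite-by-(residually finite) group need not be residually finite in general --- Deligne's central extension of $\Sp_{2n}(\mathbb Z)$ ($n\ge2$) by $\mathbb Z/2$ is the classical counterexample, and it shows in particular that $\Sp_{2n}(\mathbb Z)$ is not good. The remaining points --- finite generation of $K$ for the characteristic subgroup $L_0$, heredity of goodness and of residual finiteness under finite-index inclusion, and the naturality of inflation needed to transport the colimit description of $H^2(\widehat Q;A)$ back to $H^2(Q;A)$ --- are routine and I would only indicate them.
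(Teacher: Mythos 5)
Your argument is correct and complete: the reduction to a finite normal kernel via a characteristic finite-index subgroup of $K$ (using finite generation), the split into the cases $\bar h\notin Z(F)$ and $\bar h\in Z(F)$ via the centralizer $C_E(F)$, and the use of goodness in degree $2$ together with $H^2(\widehat Q;A)=\varinjlim_U H^2(\widehat Q/U;A)$ to realize the central extension class as inflated from a finite quotient is exactly the standard proof of this lemma. The paper itself gives no proof and simply cites \cite{GJZZ:2008}; your write-up faithfully reconstructs the argument of that reference, so the two approaches coincide.
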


This lemma provides an efficient way to build a large collection of residually finite groups which are extensions of residually finite groups and gives what we need for Seifert 4-manifolds.

\begin{corollary}
	Let $M$ be a Seifert 4-manifold over hyperbolic 2-orbifold. Then $\pi_1(M)$ is residually finite.
\end{corollary}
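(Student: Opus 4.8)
The plan is to realize $\pi_1(M)$ as a group extension and then apply Lemma \ref{extension}. A Seifert $4$-manifold $M$ over a hyperbolic $2$-orbifold $\mathcal{O}$ is the total space of a fibration (in the orbifold sense) with fiber a $2$-manifold $F$ (a torus or Klein bottle, or more generally a flat $2$-manifold, depending on the setting), so there is a short exact sequence
\[
1 \longrightarrow \pi_1(F) \longrightarrow \pi_1(M) \xrightarrow{\ \phi\ } \pi_1^{\orb}(\mathcal{O}) \longrightarrow 1 .
\]
Here $\pi_1^{\orb}(\mathcal{O})$ is a finitely generated Fuchsian group, since $\mathcal{O}$ is hyperbolic, and $\pi_1(F)$ is a finitely generated free abelian (or virtually abelian) group. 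First I would invoke the fact quoted just above the lemma that finitely generated Fuchsian groups are good, and in particular residually finite; likewise $\pi_1(F)$, being finitely generated abelian (or virtually so), is residually finite and good. Then $\pi_1^{\orb}(\mathcal{O})$ plays the role of $G$ in Lemma \ref{extension}, the surjection $\phi$ plays the role of the map $H \to G$, and $\pi_1(F)$ is the residually finite, finitely generated kernel $K$; the lemma then yields that $\pi_1(M) = H$ is residually finite, which is exactly the claim.

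The one point requiring care is the hypothesis of Lemma \ref{extension} that $G$ itself be residually finite and good: I would cite \cite{GJZZ:2008} for goodness of finitely generated Fuchsian groups and note that goodness together with finite generation gives residual finiteness via the fact (used repeatedly in this section) that $\iota\colon G \to \widehat G$ is injective precisely when $G$ is residually finite, combined with Fuchsian groups being linear hence residually finite by Mal'cev. A second, more bookkeeping-type issue is to make sure the fibration structure actually produces the displayed short exact sequence — i.e.\ that a Seifert $4$-manifold over a hyperbolic base genuinely fibers with aspherical surface fiber and that the long exact sequence of the (orbifold) fibration degenerates to the stated $1\to\pi_1(F)\to\pi_1(M)\to\pi_1^{\orb}(\mathcal{O})\to 1$; this is where one should cite Ue \cite{Ue:1990} for the precise structure of $4$-dimensional Seifert manifolds.

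The main obstacle, such as it is, is therefore not the residual finiteness argument itself — that is a one-line application of Lemma \ref{extension} — but rather pinning down the correct definition of "Seifert $4$-manifold over a hyperbolic $2$-orbifold" so that the fiber group and base orbifold group are the ones appearing in \cite{Ue:1990}, and checking that both are good and residually finite (the base by \cite{GJZZ:2008} and Mal'cev, the fiber by elementary arguments for finitely generated virtually abelian groups). Once the extension is in hand, residual finiteness of the total group is immediate. I do not expect any genuinely hard step; the corollary is essentially a packaging of the goodness results cited in the section together with Ue's structural description.
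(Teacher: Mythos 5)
Your argument is correct and is exactly the route the paper intends: the corollary is stated as an immediate consequence of Lemma \ref{extension}, applied to the extension $1\to\pi_1(F)\to\pi_1(M)\to\pi_1^{\orb}(B)\to 1$ from Ue's structure theory, with the hyperbolic orbifold group good and residually finite by \cite{GJZZ:2008} and the abelian (or virtually abelian) fiber group serving as the finitely generated residually finite kernel. The additional care you take over goodness and residual finiteness of the Fuchsian base is consistent with, and slightly more explicit than, the paper's presentation.
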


\section{Seifert fibred 4-manifolds}

What people are familiar with is the 3-dimensional Seifert fibred spaces which are circle bundle over 2-orbifolds. They are classified and associated to 3-dimensional Thurston's geometries by the orbifold characteristic number and Euler number of bundles \cite{Scott:1983}. The idea of \textit{Seifert fibred 4-manifold} $M$ is similar which means the total space of a bundle $\pi:M\rightarrow B$ over a closed $2$-orbifold $B$ such that the general fiber is $2$-torus $T^2$ or Klein bottle $K$ \cite{Ue:1990}. And Ue also characterizes Seifert 4-manifolds and illustrates the connection of 4-dimensional geometries in the sense of Thurston. While the fiber $\pi_1(T^2)$ admits non-trivial action of $\pi_1^{orb}(B)$, which brings more flexibility when comparing to $3$-dim case as the difference between outer automorphism group of $\pi_1(T^2)$ and of $\pi_1(S^1)$. The change of fiber also leads to more procedure in exact sequence and cohomology. The explicit characterize of Seifert 4-manifold by Ue gives us valid details for further work. 

The orbifold $B$ could be described by its underlying space $|B|$ and singular points including cone points, reflectors and corner reflectors associated to different type of discrete actions. Locally, the singular points $p\in B$ has a sufficiently small neighborhood $D=\mathbb{D}^2/G$ which is the quotient space of unit disk by a discrete subgroup  $G<O(2)$ fixing $p$. Then $\pi^{-1}(D)$ is identified with $T^2\times D^2\slash G$ such that $G$ acts on $T^2\times D^2$ freely. The $G$-actions on $T^2\times D^2$ and $D^2$ are compatible with the natural projection $T^2\times D^2\to D^2$. We identify the points of $T^2\times D^2$ as $(x,y,z)$ such that $(x,y)\in\mathbb{R}^2\slash \mathbb{Z}^2$ and $z\in\mathbb{C}$, $|z|\leq 1$.
 
If $G$ is trivial, we say the point $p\in B$  is a \textit{non-singular point}, and the preimage of $D$ in the total space is just $T^2\times D^2$.

If $G=\mathbb{Z}_{m}$ for $m\geq2$ generated by action $\rho$ on $T^2\times D^2$ such that $\rho(x,y,z)=(x-\frac{a}{m},\ y-\frac{b}{m},\ e^{\frac{2\pi i}{m}}z)$ with $g.c.d.(m,a,b)=1$. Then we say $p$ is a \textit{cone point} of $B$ and the fiber around $p$ is called a \textit{multiple torus of type $(m,a,b)$}.

If $G=\mathbb{Z}_{2}$ where the generator $\iota$ acts on $T^2\times D^2$ by $\iota(x,y,z)=(x+\frac{1}{2},\ -y,\ \overline{z})$. In this case, the underlying space $|B|$ is a surface with boundary such that each connected component of boundary forms a reflector circle of $B$. Then $p$ lies on one reflect circle and $\pi^{-1}(D)$ is a twisted $D^2$-bundle over the Klein bottle $K$. In this case, we still call $B$ as a closed orbifold even its underlying space is not a closed surface.

If $G=D_{2m}=\left\langle \iota,\ \rho |\ \iota^2=\rho^m=1, \iota\rho\iota^{-1}=\rho^{-1}\right\rangle$  is the dihedral group  where the generators $\iota$ and $\rho$ act on $T^2\times D^2$ by $\iota(x,y,z)=(x+\frac{1}{2},\ -y,\ \overline{z})$ and $\rho(x,y,z)=(x,\ y-\frac{b}{m},\ e^{\frac{2\pi i}{m}}z)$ with $g.c.d.(m,b)=1$. Then we say $p$ is a \textit{corner reflector} of angle $\pi\slash m$ and the fiber over $p$ is called a \textit{multiple Klein bottle of type $(m,0,b)$}. It means that the base orbifold has reflector circles such that each circle contains finitely many corner reflectors.  From now on, we don't deal with the Seifert 4-manifolds with reflector circles on purpose, for each of them has a canonical double cover which is a Seifert 4-manifold without reflector circles.

We still need describe the fibration globally which requires some invariants similar to 3-dimensional case. Suppose the base orbifold $B$ has no reflector circles, then we use the 4-dimensional Seifert invariants defined by Ue \cite{Ue:1990}:

(1) the \textit{monodromy matrices} $A_i,B_i\in SL(2,\mathbb{Z})$ satisfying $\prod_{i=1}^{g}[A_i,B_i]=I$ along the collection of standard generators $s_i,t_i\ (i=1,2,...,g)$ of $\pi_1(|B|)$, where the underlying space $|B|$ of $B$ is an orientable surface of genus $g$.

(1') the \textit{monodromy matrices} $A'_i\in GL_2(\mathbb{Z})$ along the collection of standard generators $v_i\ (i=1,2,...,g')$ of $\pi_1(|B'|)$, where the underlying space $|B'|$ of $B'$ is a non-orientable surface of genus $g'$.

(2) the tuple $(m_i,a_i,b_i)$ to describe the fiber type over cone points $p_i\ (i=1,...,t)$ of the base orbifold.

(3) the \textit{obstruction} $(a',b')\in\mathbb{Z}^2$. Let $q_i$ be the lift of the meridian circle centered at a non-singular point $p_i$, then $(a',b')$ is the obstruction to extend $\cup q_i $ to the cross section in $\pi^{-1}(B-\cup$(the disk neighbourhood of $p_i))$.

(4) When all the monodromies are trivial, we could define the euler number as $e=(a'+\Sigma a_i/m_i,\ b'+\Sigma b_i/m_i)\in\mathbb{Q}^2$  which is similar to the definition of rational euler number in 3-dimensional Seifert fibred space. The details are in \cite{Ue:1990}.

Now the fundamental group of $M$ could be represented as extension of orbifold fundamental group of $B$ by $\mathbb{Z}^2$ by the 4-dimensional Seifert invariants. Explicitly, the fundamental group of $M$ is
\begin{align*}
	\pi_1(M)=\langle x_1,\cdots,x_r,u_1,\cdots,v_g,l,h\  |\ u_j(l,h)u_j^{-1}=(l,h)A_j&,v_j(l,h)v_j^{-1}=(l,h)B_j\\
	x_i^{m_i}l^{a_i}h^{b_i}=1,\ x_1\cdots x_r\prod[u_j,v_j]=l^a h^b,[x_i,l]=[x_i,h]=[l,h]=1&\rangle
\end{align*}
where $l,h$ are the generators of the general fiber $T^2$ and $u_1,\cdots,v_g$ are the lift of generators of genus of $B$. It is apparent that the representation is not unique since there is a certain transformation between two sets of Seifert invariants of one Seifert 4-manifold.

By classifying closed orientable 4-dimensional Seifert fibred manifolds using the 4-dimensional Seifert invariants, Ue connected these manifolds to Thurston's geometries:

\begin{theorem}\cite{Ue:1990}
	Let $M$ be a closed orientable 4-manifold which is Seifert fibred over a 2-orbifold B, then
	
	(1) $B$ is spherical or bad if and only if $M$ admits geometry $\mathbb{S}^3\times\mathbb{E}$ or $\mathbb{S}^2\times\mathbb{E}^2$;
	
	(2) $B$ is flat if and only if $M$ admits one of geometries $\mathbb{E}^{4}$, $\mathbb{N}il^{4}$, $\mathbb{N}il^{3}\times \mathbb{E}$ or $\mathbb{S}ol^3\times\mathbb{E}$ except for two flat manifolds which are not Seifert \cite{Hi:2002};
	
	(3)  $B$ is hyperbolic if and only if $M$ admits one of geometries $\widetilde{\mathbb{S}L_2}\times \mathbb{E}$ and $ \mathbb{H}^{2}\times \mathbb{E}^{2}$, or $M$ is non-geometric.
\end{theorem}

\begin{theorem}\cite{Ue:1990}
	Let $M$, $N$ be two closed orientable Seifert 4-manifolds over aspherical bases with isomorphic fundamental groups, then $M$ is diffeomorphic to $N$. Moreover, if the base orbifolds of $M$ and $N$ are hyperbolic or $M$ and $N$ admit geometry  $\mathbb{N}il^{4}$ or $\mathbb{S}ol^3\times\mathbb{E}$, then the diffeomorphism is fiber-preserved between $M$ and $N$.
\end{theorem}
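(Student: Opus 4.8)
The plan is to read off both statements from the extension
$1\to F\to\pi_1(M)\to\pi_1^{orb}(B_M)\to1$, where $F$ is the fibre subgroup (isomorphic to $\mathbb{Z}^2$ for a torus fibre, virtually $\mathbb{Z}^2$ for a Klein bottle fibre), and likewise for $N$. Since a regular fibre ($T^2$ or $K$) and an aspherical $2$-orbifold base are both aspherical, the homotopy long exact sequence of the (orbifold) Seifert fibration shows that $M$ and $N$ are aspherical; hence an isomorphism $\pi_1(M)\cong\pi_1(N)$ already produces a homotopy equivalence $M\simeq N$, and the real content is to upgrade this to a diffeomorphism, fibre-preserving in the asserted cases.

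The first step is to characterise the fibre subgroup group-theoretically. When $B_M$ is hyperbolic, $\pi_1^{orb}(B_M)$ is a non-elementary cocompact Fuchsian group, hence has trivial centre and no infinite normal abelian subgroup (and no nontrivial finite normal subgroup); consequently $F$ is the unique maximal normal subgroup of $\pi_1(M)$ that is virtually $\mathbb{Z}^2$, equivalently of Hirsch length $2$. A similar but more delicate analysis, using the Fitting subgroup together with the derived series and the centre, pins down the Seifert-fibre $\mathbb{Z}^2$ canonically in the $\mathbb{N}il^4$ and $\mathbb{S}ol^3\times\mathbb{E}$ cases, because there the flat base-orbifold group contributes no competing normal $\mathbb{Z}^2$. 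By contrast, for $\mathbb{E}^4$ and $\mathbb{N}il^3\times\mathbb{E}$ the manifold carries several inequivalent Seifert fibrations (any $\mathbb{Z}^2$ summand, resp. several choices related to the centre), so no canonical $F$ exists — which is precisely why the "moreover'' clause is restricted to hyperbolic bases, $\mathbb{N}il^4$ and $\mathbb{S}ol^3\times\mathbb{E}$, and in the two exceptional flat geometries the bare diffeomorphism must be obtained by other means.

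Granting the characterisation, any isomorphism $\phi\colon\pi_1(M)\to\pi_1(N)$ carries $F_M$ to $F_N$ and descends to an isomorphism $\bar\phi\colon\pi_1^{orb}(B_M)\to\pi_1^{orb}(B_N)$. I would then invoke the classical fact that an isomorphism of cocompact Fuchsian groups (respectively of $2$-dimensional crystallographic groups) is induced by a homeomorphism of the underlying $2$-orbifolds, matching genus, cone points and cone orders; this identifies $B_M$ with $B_N$. The remaining Seifert data are the monodromy representation $\pi_1^{orb}(B)\to GL_2(\mathbb{Z})$ and the extension class in $H^2(\pi_1^{orb}(B);\mathbb{Z}^2)$ (the Euler-type invariant), and $\phi$ matches these up to the orbifold homeomorphism and up to the transformations of the Seifert invariants corresponding to a change of section in Ue's list. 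Pulling back the $T^2$-bundle $N\to B_N$ along the orbifold homeomorphism and using that the bundle invariants now agree produces a fibre-preserving diffeomorphism $M\to N$; in the two exceptional flat geometries one simply drops "fibre-preserving'' and cites the relevant rigidity (Bieberbach's theorem for $\mathbb{E}^4$, rigidity of infra-nilmanifolds for $\mathbb{N}il^3\times\mathbb{E}$) to get the diffeomorphism from the homotopy equivalence.

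The main obstacle is the first step: proving, geometry by geometry, that the Seifert-fibre subgroup is or is not characteristic in $\pi_1$, since this dichotomy is exactly what the statement encodes. For hyperbolic bases it is clean, but for the flat-base geometries one must carefully control the normal abelian subgroups of the $2$-dimensional crystallographic groups and of their $\mathbb{Z}^2$-extensions, and separate $\mathbb{N}il^4,\ \mathbb{S}ol^3\times\mathbb{E}$ from $\mathbb{E}^4,\ \mathbb{N}il^3\times\mathbb{E}$. A secondary difficulty is the passage from homotopy equivalence to diffeomorphism for these aspherical $4$-manifolds: in the fibred cases it comes for free once the bundle is reconstructed from $\pi_1$, but in the exceptional flat cases it rests on the (known) smooth rigidity of flat and almost-flat manifolds, and one must check that the needed input is available in every case.
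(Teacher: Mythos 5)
This theorem is quoted from Ue \cite{Ue:1990} and the paper supplies no proof of its own, so the only meaningful comparison is with the cited source; your outline follows essentially the same route as Ue's argument (see also \cite{Hi:2002}): identify the fibre subgroup group-theoretically (the unique maximal normal subgroup of Hirsch length $2$ over a hyperbolic base, a finer analysis via the Hirsch--Plotkin radical and derived series for $\mathbb{N}il^4$ and $\mathbb{S}ol^3\times\mathbb{E}$, and no canonical choice for $\mathbb{E}^4$ and $\mathbb{N}il^3\times\mathbb{E}$), push the isomorphism down to the base orbifold groups, and reconstruct the fibration from the matched invariants. Your sketch is sound, and you correctly avoid the trap of trying to ``upgrade'' a homotopy equivalence to a diffeomorphism (not available in dimension $4$); the two places where the real work is only gestured at are the case-by-case verification that the fibre $\mathbb{Z}^2$ is preserved by every isomorphism in the $\mathbb{N}il^4$ and $\mathbb{S}ol^3\times\mathbb{E}$ cases, and the uniqueness statement that Seifert $4$-manifolds with identical base, monodromy and extension class are fibre-preservingly diffeomorphic, both of which are the substance of Ue's paper.
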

However, it needs to be clarified that the Seifert fibration structures of a Seifert 4-manifold may not be unique when the base orbifold is flat, spherical or bad, and the examples with geometries $\mathbb{E}^{4}$ and $\mathbb{N}il^{3}\times \mathbb{E}$ are exhibited in \cite{Ue:1988}. There also exist 4-dimensional Seifert manifolds over hyperbolic base such that they can not admit any 4-dimensional geometries. From the view of representing the Seifert fibred 4-manifolds with Seifert invariants, we use Theorem B \cite{Ue:1991} when the base orbifold is orientable:
\begin{theorem}
	Suppose $M$ is a 4-dimensional Seifert manifold over hyperbolic 2-orbifold without reflect circle. Then the Seifert fibered structure satisfies exactly one of following three conditions.
	
(1) $M$ admits $X=\mathbb{H}^2\times\mathbb{E}^2$-geometry where all monodromy matrices are powers of one common matrix $Q$ which is conjugate to $\begin{pmatrix}
		0&1\\-1&0
	\end{pmatrix}$ or $\begin{pmatrix}
		1&1\\-1&0
	\end{pmatrix}$ in $\SL_2(\mathbb{Z})$. Additionally, if all the monodromy are trivial, then the rational Euler number $e=(0,0)\in\mathbb{R}^2$;

(2) $M$ admits $X=\widetilde{\SL_2}\times\mathbb{Z}$-geometry where all the monodromy are trivial while rational Euler number $e\neq(0,0)$.

(3) $M$ could not admit any geometric structure in the sense of Thurston.

\end{theorem}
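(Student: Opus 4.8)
The plan is to analyze $M$ through the short exact sequence
\[
1 \longrightarrow \mathbb{Z}^2 \longrightarrow \pi_1(M) \longrightarrow \Gamma \longrightarrow 1,
\]
where $\Gamma=\pi_1^{orb}(B)$ is a cocompact Fuchsian group (because $B$ is hyperbolic), the $\mathbb{Z}^2$ is the fibre subgroup, and the conjugation action of $\Gamma$ on it is the monodromy $\theta:\Gamma\to\SL_2(\mathbb{Z})$ (landing in $\SL_2$ since $M$ and $B$ are orientable). The whole statement is a trichotomy governed by the image $\theta(\Gamma)$, so I would begin by recalling the structure of finite subgroups of $\SL_2(\mathbb{Z})$: via the amalgam $\SL_2(\mathbb{Z})\cong\mathbb{Z}/4\ast_{\mathbb{Z}/2}\mathbb{Z}/6$ every finite subgroup is cyclic of order $1,2,3,4$ or $6$, and each such group is generated by a matrix conjugate in $\SL_2(\mathbb{Z})$ to a power of $\bigl(\begin{smallmatrix}0&1\\-1&0\end{smallmatrix}\bigr)$ or of $\bigl(\begin{smallmatrix}1&1\\-1&0\end{smallmatrix}\bigr)$. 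Thus either $\theta(\Gamma)$ is such a cyclic group — trivial, or generated by one matrix $Q$ of the stated form — or $\theta(\Gamma)$ is infinite, hence contains a parabolic or hyperbolic element.

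In the case $\theta(\Gamma)$ finite I would build the geometry directly. A finite-order element of $\SL_2(\mathbb{Z})$ is conjugate inside $\SL_2(\mathbb{R})$ to a genuine rotation of $\mathbb{R}^2$, so after a linear change of the fibre coordinates the monodromy factors through a finite group of Euclidean rotations of $\mathbb{E}^2=\mathbb{R}^2$; combining the proper cocompact action of $\Gamma$ on $\mathbb{H}^2$, the translation action of $\mathbb{Z}^2$ on $\mathbb{E}^2$, and these rotations yields a candidate homomorphism $\pi_1(M)\to\Isom(\mathbb{H}^2)\times\Isom(\mathbb{E}^2)$. The point is that with nontrivial (rotational) monodromy the twisted module $\mathbb{Q}^2_\theta$ has no nonzero invariants, so by orbifold Poincar\'e duality $H^2(\Gamma;\mathbb{Q}^2_\theta)=0$; the extension class is torsion, the obstruction to promoting the candidate homomorphism to a faithful discrete cocompact action vanishes, and $M$ is $\mathbb{H}^2\times\mathbb{E}^2$. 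With trivial monodromy the obstruction is instead the rational Euler number $e\in\mathbb{Q}^2$, and $e=(0,0)$ gives $\mathbb{H}^2\times\mathbb{E}^2$, while for $e\neq(0,0)$ I would change basis of $\mathbb{Z}^2$ so that $e$ lies along a coordinate axis: the circle in that direction makes $M$ a circle bundle over a three-dimensional $\widetilde{\SL_2}$-Seifert manifold with nonzero three-dimensional Euler number, and the remaining circle splits off as a genuine Euclidean factor, producing the $\widetilde{\SL_2}\times\mathbb{E}$ structure.

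In the case $\theta(\Gamma)$ infinite I would show $M$ admits no Thurston geometry. By Ue's classification \cite{Ue:1990} the only geometries a Seifert $4$-manifold over a hyperbolic base could carry are $\widetilde{\SL_2}\times\mathbb{E}$ and $\mathbb{H}^2\times\mathbb{E}^2$. Since $\Gamma$ (torsion aside) is a surface group it has no nontrivial normal abelian subgroup, so the fibre $\mathbb{Z}^2\triangleleft\pi_1(M)$ is the \emph{unique} maximal normal abelian subgroup of $\pi_1(M)$ and is therefore canonical. In either candidate geometry a finite-index subgroup of $\pi_1(M)$ acts on its maximal normal abelian $\mathbb{Z}^2$ — the translation lattice of the $\mathbb{E}^2$ factor, respectively the product of the centre of $\widetilde{\SL_2}$ with the lattice of the $\mathbb{E}$ factor — through a finite group of isometries; hence $\theta(\Gamma)$ would be virtually finite, i.e.\ finite, a contradiction. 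Finally the three alternatives are mutually exclusive: geometric versus non-geometric is tautologically disjoint; $M$ is aspherical (its hyperbolic orbifold base and $T^2$ fibre have vanishing higher homotopy), so it carries at most one Thurston geometry \cite{Hi:2002} and the two product geometries are distinct; and within the trivial-monodromy situation the value of $e$ separates the $\mathbb{H}^2\times\mathbb{E}^2$ case from the $\widetilde{\SL_2}\times\mathbb{E}$ case.

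The main obstacle will be the realization step in the finite-monodromy case: turning the abstract extension data — the monodromy $\theta$, the obstruction $(a',b')$, and the cone-point tuples $(m_i,a_i,b_i)$ — into an honest proper cocompact isometric action, and pinning down exactly how the Euler class mediates the split between the two product-type geometries. Concretely this demands (i) a fibre basis that simultaneously turns the finite monodromy into rotations and places the Euler vector on a coordinate axis, (ii) identifying the cohomological obstruction to assembling the $\mathbb{H}^2$- and $\mathbb{E}^2$-actions with $e$, and checking that it is forced to vanish once the monodromy is nontrivial, and (iii) verifying in the $\widetilde{\SL_2}\times\mathbb{E}$ sub-case that the distinguished circle really carries a $\widetilde{\SL_2}$ three-manifold with the correct nonzero three-dimensional Euler number. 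By comparison, ruling out the infinite-monodromy case is comparatively soft, resting only on the rigidity of the fibre subgroup coming from the absence of normal abelian subgroups in a hyperbolic $2$-orbifold group.
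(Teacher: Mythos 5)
The paper does not actually prove this statement: it is imported verbatim as Theorem B of Ue \cite{Ue:1991} and used as a black box, so there is no internal proof to compare against. Your outline follows what is essentially the standard argument (closer in spirit to Hillman's group-theoretic treatment in \cite{Hi:2002} than to Ue's original, which works more directly with the Seifert invariants and explicit geometric models), and its skeleton is sound: the trichotomy on the image of the monodromy representation $\theta\colon\Gamma\to\SL_2(\mathbb{Z})$; the fact that every finite subgroup of $\SL_2(\mathbb{Z})$ is cyclic of order $1,2,3,4,6$ and conjugate into one of the two standard cyclic groups; the vanishing of $H^2(\Gamma;\mathbb{Q}^2_\theta)$ by rational duality when $\theta$ is nontrivial with finite image; the identification of the surviving obstruction with the rational Euler number when $\theta$ is trivial; and the exclusion of infinite monodromy via the canonicity of the fibre $\mathbb{Z}^2$ as the unique maximal normal abelian subgroup together with the finiteness of the induced action on it in either candidate geometry.

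Two points need repair or completion. First, in the $e\neq(0,0)$ case the phrase ``the remaining circle splits off as a genuine Euclidean factor'' is not literally correct: the extension of the three-dimensional $\widetilde{\SL_2}$-manifold group by the residual $\mathbb{Z}$ need not split. What is true is that $\Isom(\widetilde{\SL_2}\times\mathbb{E})$ contains a canonical $\mathbb{R}^2$ (the centre of the $\widetilde{\SL_2}$-factor times the translations of $\mathbb{E}$), and the whole fibre $\mathbb{Z}^2$ embeds as a lattice in that $\mathbb{R}^2$, in general not aligned with the two coordinate lines; the circle bundle $M\to N$ is only rationally trivial. Second, the realization step you yourself flag as the main obstacle is exactly where the content of the theorem lies: pushing the extension class into $H^2(\Gamma;\mathbb{R}^2_\theta)$, splitting the pushed-out extension, checking that the resulting homomorphism $\pi_1(M)\to\Isom(\mathbb{H}^2)\times\Isom(\mathbb{E}^2)$ is faithful, discrete and cocompact, and then invoking Ue's diffeomorphism rigidity over aspherical bases to upgrade a $\pi_1$-level realization to a geometric structure on $M$ itself. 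As an outline the proposal is correct; as a proof it stops precisely at the step that separates the theorem from a formal computation, so those details would have to be supplied (or the statement simply cited, as the paper does).
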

It is rather strange at first glance that the Euler number defines only for the trivial-monodromy case. This condition is actually reasonable since the Euler number should reflect whether a finite cover of extension is split which is indeed what happens in 3-dimensional Seifert fibred spaces. While for the 4-dimensional case when the Euler number is combined with non-trivial action, the above property is not guaranteed. For example we suppose a Seifert manifold $M$ with geometry $\mathbb{H}^2\times\mathbb{E}^2$ such that the monodromies are all trivial except for one monodromy matrix is $\begin{pmatrix}
	0&1\\-1&0
\end{pmatrix}$ with index 4. We could formally define its Euler number as $e=(a-\Sigma\frac{a_i}{m_i},b-\Sigma\frac{b_i}{m_i})$. When we take a 4-fold cover $M'$ of $M$ such that the orbifold $B'$ of $M'$ is a 4-fold cover of $B$ with respect to the generator of monodromy. However the Euler number of $M$ is $(I+A+A^2+A^3)e$ which is actually trivial since $A$ has index $4$ and $I+A+A^2+A^3=0$.

On the other hand, the condition of $M$ being geometric could also be rephrased from the view of geometric group theory by Hillman, and we follow the definition and results in \cite{Hi:2002}.

\begin{theorem}\label{cor:hyperbolic}\cite[Corollary 9.2.1]{Hi:2002}
	Let $M$ be a smooth closed 4-manifold with fundamental group $\pi$, then $M$ is homotopy equivalent to a Seifert fibred space with general fibre $T^2$ or Klein bottle over a hyperbolic 2-orbifold if and only if $h(\sqrt{\pi})=2$, $[\pi:\sqrt{\pi}]=\infty$ and $\chi(M)=0$.
\end{theorem}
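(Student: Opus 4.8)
The plan is to treat the two implications separately. The ``only if'' direction is a direct computation with the defining extension of a Seifert fibration, while the ``if'' direction requires first promoting the group-theoretic hypotheses to the assertion that $M$ is aspherical and $\pi$ is a $4$-dimensional Poincar\'{e} duality group, and then recognising $\pi$ as the group of a Seifert fibred model.

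For the ``only if'' direction, suppose $M$ is homotopy equivalent to the total space $M'$ of a Seifert fibration $p\colon M'\to B$ with general fibre $F\in\{T^2,K\}$ over a hyperbolic $2$-orbifold $B$. Then $\pi\cong\pi_1(M')$ sits in an exact sequence $1\to\pi_1(F)\to\pi\to\pi_1^{orb}(B)\to 1$ in which $\pi_1(F)$ is normal and virtually $\mathbb{Z}^2$, so $h(\sqrt{\pi})\ge 2$. Since $B$ is hyperbolic, $\pi_1^{orb}(B)$ is a cocompact Fuchsian group of negative orbifold Euler characteristic; passing to a torsion-free finite-index surface subgroup of genus $\ge 2$ (which has trivial Hirsch--Plotkin radical) and using that such a Fuchsian group has no non-trivial finite normal subgroup, one gets $\sqrt{\pi_1^{orb}(B)}=1$. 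The image of the locally nilpotent normal subgroup $\sqrt{\pi}$ in $\pi_1^{orb}(B)$ is therefore trivial, so $\sqrt{\pi}\le\pi_1(F)$; hence $h(\sqrt{\pi})=2$ and $[\pi:\sqrt{\pi}]\ge[\pi:\pi_1(F)]=\lvert\pi_1^{orb}(B)\rvert=\infty$. Finally $\chi(M)=\chi(M')=\chi(F)\cdot\chi^{orb}(B)=0$, since $\chi(T^2)=\chi(K)=0$ (equivalently, a finite cover of $M'$ is an honest $T^2$- or $K$-bundle over a closed surface).

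For the ``if'' direction I would argue in four steps. \emph{Step 1 (asphericity).} Using that $\pi$ is finitely presented while $\sqrt{\pi}$ is a normal subgroup of Hirsch length $2$ and infinite index, together with $\chi(M)=0$, one shows that $M$ is aspherical and $\pi$ is a $PD_4$-group; this is the content, in the earlier chapters of \cite{Hi:2002}, of the interplay between the Euler characteristic of a $PD_4$-complex, the second homotopy group as a $\mathbb{Z}[\pi]$-module, and the vanishing of the $L^2$-Betti numbers for groups with an infinite amenable normal subgroup, which rules out connected-sum stabilisations and identifies $M$ with the minimal model $K(\pi,1)$. \emph{Step 2 ($\sqrt{\pi}\cong\mathbb{Z}^2$).} With $\pi$ now torsion free, the finitely generated torsion-free nilpotent group $\sqrt{\pi}$ of Hirsch length $2$ is $\mathbb{Z}^2$, so $\mathbb{Z}^2\cong\sqrt{\pi}\triangleleft\pi$ and $Q:=\pi/\sqrt{\pi}$ is infinite. \emph{Step 3 (the base is hyperbolic).} The rational form of Bieri's theorem on extensions of duality groups shows $Q$ is a $PD_2$-group over $\mathbb{Q}$, hence by the Eckmann--M\"{u}ller--Linnell theorem (in its virtual version) it is the orbifold fundamental group $\pi_1^{orb}(B)$ of a closed $2$-orbifold $B$; as $Q$ is infinite, $B$ is flat or hyperbolic, and a flat $B$ is impossible because then $\pi$ would be virtually polycyclic of Hirsch length $4$ with nilradical of Hirsch length $\ge 3$, forcing $h(\sqrt{\pi})\ge 3$. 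So $B$ is hyperbolic. \emph{Step 4 (realisation).} By Ue's classification \cite{Ue:1990}, the extension $1\to\mathbb{Z}^2\to\pi\to\pi_1^{orb}(B)\to 1$ is realised by a Seifert fibred $4$-manifold $M'$ over the hyperbolic orbifold $B$, with general fibre $T^2$ (or $K$ when $\sqrt{\pi}$ sits inside a larger normal copy of $\pi_1(K)$); since $M'$ is aspherical with $\pi_1(M')\cong\pi$, uniqueness of Eilenberg--MacLane spaces gives $M'\simeq K(\pi,1)\simeq M$.

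The hard part will be Step 1: deducing asphericity of $M$ and Poincar\'{e} duality of $\pi$ from the purely group-theoretic data and $\chi(M)=0$. This is a genuine theorem about $PD_4$-complexes rather than a formal manipulation, and is where one must import the homological machinery of \cite{Hi:2002}. Once Step 1 is in hand, Steps 2--4 are essentially bookkeeping with duality groups together with Ue's explicit Seifert constructions already present in \cite{Ue:1990}.
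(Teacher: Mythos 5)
You should first note that the paper contains no proof of this statement at all: it is quoted verbatim, with attribution, as Corollary 9.2.1 of \cite{Hi:2002}, so there is no in-paper argument to compare your sketch against; the only meaningful comparison is with Hillman's own proof, and against that your outline has the right architecture. The ``only if'' direction is essentially complete as you give it: the key points are that a cocompact hyperbolic $2$-orbifold group has trivial Hirsch--Plotkin radical, so that $\sqrt{\pi}$ is trapped inside the fibre group (and contains $\sqrt{\pi_1(F)}$, giving equality of Hirsch lengths), together with multiplicativity of $\chi$. For the ``if'' direction you correctly isolate the crux in Step 1 --- deducing asphericity of $M$ and the $PD_4$ property of $\pi$ from $\chi(M)=0$ plus the existence of the normal subgroup of Hirsch length $2$ and infinite index --- and you name exactly the machinery Hillman uses (vanishing of $L^2$-Betti numbers in the presence of an infinite amenable normal subgroup, and the homological criterion for asphericity of a $PD_4$-complex). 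But as written this step is a citation rather than a proof, and it is the only genuinely hard part of the theorem.

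Two smaller points need care. In Step 2, a torsion-free locally nilpotent group of Hirsch length $2$ is not automatically finitely generated (it could a priori be a rank-two subgroup of $\mathbb{Q}^2$), so you must justify that $\sqrt{\pi}\cong\mathbb{Z}^2$ using that it sits as a normal subgroup of a $PD_4$-group; Hillman does this explicitly. In Step 4, the realisation of an arbitrary torsion-free extension of a hyperbolic $2$-orbifold group by $\mathbb{Z}^2$ (respecting the given action and cohomology class) as the fundamental group of a Seifert fibred $4$-manifold is itself a theorem of Ue \cite{Ue:1990} (see also Hillman's Chapter 7), not a formality, though citing it is legitimate here. With those caveats your route coincides with the source's.
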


\section{Mapping class group of surface}
Since the Seifert invariants including monodromy and fiber type all depend on the choice of generators and framing, it is necessary to modify the representation. When the orbifold contains no singular points, $\pi_1^{\orb}(B)\cong\pi_1(\Sigma_g)$. Consider the surface with $S=\Sigma_{g,n}$ genus $g$ and $n$ connected component of boundary, its mapping class group $$\Mod(S,\partial S)=\pi_0(\Homeo^+(\Sigma_g)),$$ consists all the isotopy class of homeomorphisms which fixing boundary and is orientation-preserving, i.e. $\Mod(\Sigma_g)=\Homeo^+(S,\partial S)/\Homeo^0(S,\partial S)$, where $\Homeo^0(S,\partial S)$ is the connected component of $\Homeo^+(S,\partial S)$ including identity. It is more convenient to consider the homeomorphism of surface fixing $n$ points, which leads to the mapping class group $\Mod(\Sigma_{g,n})$. The generator set of mapping class group is closely related to Dehn twist, which we mainly refer to the insightful book \cite{Fa-Ma} by Farb-Margalit. The following theorem explains why we care about mapping class group.
\begin{theorem}[Dehn-Nielson-Baer]
	 The homomorphism $$\sigma: \Mod^\pm(S_g)\to \Out(\pi_1(S_g))$$ is an isomorphism when $g\geq 1$.
\end{theorem}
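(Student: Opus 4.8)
The plan is to split the isomorphism into injectivity and surjectivity of $\sigma$, treating the torus $g=1$ by a linear computation and the case $g\ge 2$ via the boundary action of $\pi_1(S_g)$ on $\mathbb{H}^2$. First I would make $\sigma$ precise: a homeomorphism $f$ of $S_g$, together with a choice of path from $f(\ast)$ back to the basepoint $\ast$, induces an automorphism $f_{\#}$ of $\pi_1(S_g,\ast)$ that is well defined up to inner automorphism; since isotopic homeomorphisms are homotopic and hence induce the same outer class, this descends to the homomorphism $\sigma\colon\Mod^{\pm}(S_g)\to\Out(\pi_1(S_g))$. For $g=1$ one has $\pi_1(S_1)\cong\mathbb{Z}^2$ and $\Out(\mathbb{Z}^2)=\Aut(\mathbb{Z}^2)=\GL(2,\mathbb{Z})$: the linear action of $\GL(2,\mathbb{Z})$ on $\mathbb{R}^2/\mathbb{Z}^2$ realizes every outer automorphism, while a homeomorphism inducing the identity on $\pi_1$ lifts to a $\mathbb{Z}^2$-equivariant self-map of $\mathbb{R}^2$, hence is homotopic, and then (homotopic homeomorphisms of a closed surface being isotopic) isotopic, to the identity; so $\sigma$ is an isomorphism here.

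For injectivity when $g\ge2$: if $\sigma([f])=1$ then $f_{\#}$ is inner, so after adjusting the basepoint path $f$ induces the identity on $\pi_1(S_g)$; since $S_g$ is aspherical this forces $f$ to be homotopic to the identity, and I would invoke the classical fact that homotopic homeomorphisms of a closed surface are isotopic to conclude $[f]=1$. This leaves surjectivity for $g\ge2$ as the substantial point.

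For surjectivity, fix a hyperbolic structure, giving a cocompact Fuchsian representation $\rho\colon\pi_1(S_g)\hookrightarrow\PSL(2,\mathbb{R})$ with $\mathbb{H}^2/\rho(\pi_1(S_g))\cong S_g$, and let $\phi\in\Aut(\pi_1(S_g))$. The steps would be: (i) by the Milnor--\v{S}varc lemma the orbit map realizes a quasi-isometry between $\pi_1(S_g)$ (with a word metric) and $\mathbb{H}^2$, so $\phi$, being bi-Lipschitz on the group, induces a quasi-isometry of $\mathbb{H}^2$; (ii) quasi-isometries of $\mathbb{H}^2$ extend continuously to homeomorphisms of the Gromov boundary $S^1$, yielding $\partial\phi\colon S^1\to S^1$; (iii) chasing the equivariance of the quasi-isometries involved shows $\partial\phi\circ\rho(\gamma)|_{S^1}=\rho(\phi(\gamma))|_{S^1}\circ\partial\phi$ for every $\gamma\in\pi_1(S_g)$; (iv) since $\phi$ is an automorphism, $\rho(\phi(\pi_1(S_g)))=\rho(\pi_1(S_g))$ still acts by isometries of $\mathbb{H}^2$, so the conformally natural Douady--Earle extension $F\colon\overline{\mathbb{H}^2}\to\overline{\mathbb{H}^2}$ of $\partial\phi$ is a homeomorphism with $F\circ\rho(\gamma)=\rho(\phi(\gamma))\circ F$, by naturality in both variables; (v) restricted to the open disk, $F$ is equivariant and therefore descends to a homeomorphism $\bar f$ of $S_g$ with $\bar f_{\#}=\phi$ up to conjugacy, proving $\sigma$ onto. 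If one wishes to avoid Douady--Earle, step (iv) can instead be carried out combinatorially: $\partial\phi$ maps the $\rho(\pi_1(S_g))$-invariant family of axes of group elements to itself equivariantly, and one straightens along this family to build $F$ on the interior.

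The main obstacle is surjectivity, and within it step (iv). A priori the boundary homeomorphism $\partial\phi$ pins down $\phi$ only up to the whole group $\Homeo^+(S^1)$, whereas to descend to $S_g$ and recover $\phi$ one needs a genuinely $\pi_1(S_g)$-\emph{equivariant} homeomorphism of the disk. Bridging this gap --- via the conformal naturality of the Douady--Earle extension or, alternatively, a careful straightening along a $\pi_1(S_g)$-invariant geodesic system --- is the crux; once it is done, asphericity of $S_g$ together with the ``homotopic implies isotopic'' theorem for surfaces render injectivity and well-definedness comparatively routine.
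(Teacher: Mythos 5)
The paper gives no proof of this theorem---it is quoted as a classical result with a reference to Farb--Margalit---and your outline is precisely the standard argument from that source: the torus case by linear algebra, injectivity from asphericity plus ``homotopic implies isotopic,'' and surjectivity via Milnor--\v{S}varc, the boundary extension of the induced quasi-isometry of $\mathbb{H}^2$, and an equivariant filling of the disk (Douady--Earle or straightening along an invariant geodesic system). The plan is correct, and you have rightly identified the equivariant extension to the interior as the crux.
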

Hense we could see $\Mod(S_g)$ as an subgroup of $\Out(\pi_1(S_g))$ of index 2. The annulus $A=S^1\times I$ with its mapping class group $\Mod(A)$ plays vital part when we consider the mapping class group of surface. Firstly we have:
\begin{theorem}
	The mapping class group of annulus $\Mod(A)\cong\mathbb{Z}$.
\end{theorem}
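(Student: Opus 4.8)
The plan is to construct an explicit isomorphism $n\colon\Mod(A)\to\mathbb{Z}$ carrying the Dehn twist $T$ about the core curve of $A$ to a generator, and to verify surjectivity and injectivity separately by way of the Alexander method. Throughout I would fix the identification $A=S^1\times[0,1]$, the universal cover $\widetilde A=\mathbb{R}\times[0,1]$, and the spanning arc $\delta=\{p\}\times[0,1]$ joining the two boundary circles.

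First I would define $n$. Given $\phi\in\Homeo^+(A,\partial A)$, lift it to $\widetilde\phi\colon\widetilde A\to\widetilde A$, normalized so that $\widetilde\phi$ is the identity on $\mathbb{R}\times\{0\}$; this is possible because $\phi$ fixes that boundary circle pointwise, so its lift there is a deck transformation which we may cancel. Then $\widetilde\phi$ restricted to $\mathbb{R}\times\{1\}$ is again a lift of the identity, hence translation by a unique integer $n(\phi)$ (equivalently, $n(\phi)$ is the algebraic intersection number of $\phi(\delta)$ with $\delta$, or the winding number of $\phi(\delta)$ about the core). I would check that $n(\phi)$ depends only on the class of $\phi$ in $\Mod(A)$: an isotopy of $\phi$ rel $\partial A$ lifts to an isotopy that is constant on $\partial\widetilde A$, so the translation amount on $\mathbb{R}\times\{1\}$ varies continuously in the discrete set $\mathbb{Z}$ and is therefore constant. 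A short check with the normalized lifts shows $n(\phi\psi)=n(\phi)+n(\psi)$, so $n$ is a homomorphism, and a direct computation gives $n(T)=\pm1$, which yields surjectivity onto $\mathbb{Z}$.

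It remains to prove injectivity. Suppose $n(\phi)=0$. Then the normalized lift $\widetilde\phi$ fixes $\partial\widetilde A$ pointwise, equivalently $\phi(\delta)$ is isotopic to $\delta$ rel $\partial A$; after replacing $\phi$ by an isotopic map we may assume $\phi(\delta)=\delta$ pointwise. Cutting $A$ along $\delta$ yields a disk $D$, on which $\phi$ induces a homeomorphism fixing $\partial D$ pointwise; by Alexander's trick such a homeomorphism is isotopic to the identity rel $\partial D$, and this isotopy descends to an isotopy of $\phi$ to the identity rel $\partial A$, so $[\phi]=1$. Hence $n$ is an isomorphism and $\Mod(A)\cong\mathbb{Z}$. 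The main obstacle is the bookkeeping in this last paragraph, namely arranging the isotopy $\phi(\delta)\simeq\delta$, the cutting and regluing, and Alexander's trick so that every isotopy involved is genuinely rel $\partial A$; this is exactly the content of the Alexander method as developed in \cite{Fa-Ma}, which I would invoke to keep the argument clean.
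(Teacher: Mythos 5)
Your proof is correct and is essentially the argument the paper has in mind: the paper states this theorem without proof, citing Farb--Margalit, and its subsequent identification of a mapping class with the shear $\begin{pmatrix}1&n\\0&1\end{pmatrix}$ acting on the universal cover $\widetilde A=\mathbb{R}\times I$ rel boundary is exactly your translation-number homomorphism $n(\phi)$. Your injectivity step (cut along the spanning arc $\delta$ and apply Alexander's trick) is the standard Alexander-method completion of that same picture, so the proposal fills in precisely the details the paper delegates to \cite{Fa-Ma}.
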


Every element in $\Mod(A)$ is associated to $\begin{pmatrix}
	1&n\\0&1
\end{pmatrix}$ with $n\in\mathbb{Z}$, deciding a linear transformation of the universal covering space $\tilde{A}=\mathbb{R}\times I$ which keeps the boundary of annulus after projection. when $n=-1$, the element $\begin{pmatrix}
	1&-1\\0&1
\end{pmatrix}$ decides a generator $\phi$ in $\Mod(A)$, which is the so-called Dehn twist.

\begin{figure}[htbp]
	\centering
	\includegraphics[scale=0.9]{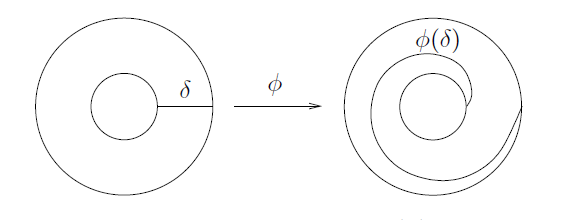}
	\caption{Dehn twist on the annulus \cite{Fa-Ma}}
\end{figure}

We could generalize Dehn twist on surface taking annulus as submanifold of surface and understand each element of $Mod(\Sigma_g)$ by its action on simple closed curves. The following vital result by Dehn.

\begin{theorem}
	The mapping class group $\Mod(T^2)$ acts on $H_1(T^2;\mathbb{Z})\cong\mathbb{Z}^2$, which induces an isomorphism $$\Mod(T^2)\to \SL(2,\mathbb{Z}).$$
\end{theorem}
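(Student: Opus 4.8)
The plan is to show that the homology action $\sigma\colon \Mod(T^2)\to\GL(2,\mathbb Z)$, $[f]\mapsto f_\ast$ on $H_1(T^2;\mathbb Z)\cong\mathbb Z^2$, is a well-defined homomorphism with image exactly $\SL(2,\mathbb Z)$, and then to prove it is bijective. Well-definedness on isotopy classes is immediate because isotopic maps induce the same map on $H_1$, and the homomorphism property is functoriality of homology. To see that the image lies in $\SL(2,\mathbb Z)$ rather than merely in $\GL(2,\mathbb Z)$, note that an orientation-preserving $f$ acts as $+\mathrm{id}$ on $H_2(T^2;\mathbb Z)\cong\mathbb Z$; since $H_2(T^2;\mathbb Z)\cong\Lambda^2 H_1(T^2;\mathbb Z)$ (equivalently, via the intersection form), the induced action on $H_2$ is multiplication by $\det(f_\ast)$, so $\det(f_\ast)=1$.

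For surjectivity I would exhibit a section. The standard linear action of $\SL(2,\mathbb Z)$ on $\mathbb R^2$ preserves the lattice $\mathbb Z^2$, hence descends to a self-homeomorphism of $T^2=\mathbb R^2/\mathbb Z^2$, which is orientation-preserving since $\det=1>0$. This gives a group homomorphism $s\colon\SL(2,\mathbb Z)\to\Mod(T^2)$ with $\sigma\circ s=\mathrm{id}$, so $\sigma$ is onto; concretely $\begin{pmatrix}1&1\\0&1\end{pmatrix}$ and $\begin{pmatrix}1&0\\1&1\end{pmatrix}$ are realized by Dehn twists about the two standard curves, which already generate $\SL(2,\mathbb Z)$.

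The real content is injectivity. The cleanest route is to invoke the Dehn--Nielsen--Baer theorem recorded above (valid since $g=1\ge 1$): $\sigma\colon\Mod^{\pm}(T^2)\to\Out(\pi_1(T^2))$ is an isomorphism, and because $\pi_1(T^2)\cong\mathbb Z^2$ is abelian one has $\Out(\mathbb Z^2)=\Aut(\mathbb Z^2)=\GL(2,\mathbb Z)$, this isomorphism being precisely the action on $\pi_1(T^2)=H_1(T^2;\mathbb Z)$. Restricting to the index-two subgroup $\Mod(T^2)$ of orientation-preserving classes, its image is the index-two subgroup $\SL(2,\mathbb Z)\subset\GL(2,\mathbb Z)$ by the determinant computation above, and injectivity is inherited. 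Alternatively, and more self-containedly: given $f$ with $f_\ast=\mathrm{id}$, the curve $f(\alpha)$, where $\alpha$ is the standard meridian, has primitive homology class $[\alpha]$; since essential simple closed curves on $T^2$ are classified up to isotopy by their homology class modulo sign, $f(\alpha)$ is isotopic to $\alpha$, so after an isotopy $f$ fixes $\alpha$ setwise. The same reasoning together with the bigon criterion lets us further isotope $f$ so that it also fixes the standard longitude $\beta$; cutting $T^2$ along $\alpha\cup\beta$ yields a disk on which $f$ restricts to a homeomorphism fixing the boundary, which is isotopic to the identity rel boundary by Alexander's lemma, and that isotopy re-glues to an isotopy from $f$ to $\mathrm{id}_{T^2}$.

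I expect injectivity to be the only genuine obstacle. If one wishes to avoid quoting Dehn--Nielsen--Baer (whose own proof is, in any case, harder than the present statement), the delicate points are the classification of simple closed curves on the torus up to isotopy, the bigon criterion used to straighten $f(\beta)$ against $\alpha$, and the bookkeeping needed to promote the disk-level isotopy back to the torus --- all standard and contained in Farb--Margalit \cite{Fa-Ma}, but requiring care.
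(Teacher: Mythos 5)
The paper offers no proof of this statement: it is quoted as a classical result of Dehn, with Farb--Margalit \cite{Fa-Ma} as the reference. Your argument is the standard one and is correct in all essentials: well-definedness and the homomorphism property are formal; the determinant computation via the action on $H_2(T^2;\mathbb{Z})\cong\Lambda^2 H_1(T^2;\mathbb{Z})$ correctly places the image in $\SL(2,\mathbb{Z})$; and the linear section $\SL(2,\mathbb{Z})\to\Mod(T^2)$ gives surjectivity. One caution on your first route to injectivity: in the usual development (including \cite{Fa-Ma}), the Dehn--Nielsen--Baer theorem for $g=1$ is itself \emph{deduced from} the present theorem together with $\Out(\mathbb{Z}^2)=\GL(2,\mathbb{Z})$ --- the quasi-isometry proof of Dehn--Nielsen--Baer only covers $g\ge 2$ --- so invoking it here is circular unless you supply an independent proof for the torus case. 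Your second, self-contained route (classification of essential simple closed curves on $T^2$ by their homology class up to sign, straightening $f(\alpha)$ and $f(\beta)$, cutting to a disk, and applying Alexander's lemma) avoids this and is exactly the Alexander-method proof; the ``delicate points'' you flag are genuine but standard. An equivalent shortcut worth noting: since $T^2$ is aspherical, a homeomorphism acting trivially on $\pi_1$ is homotopic to the identity, and homotopic homeomorphisms of surfaces are isotopic.
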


Suppose $T:S^1\times I\to S^1\times I,\ (\theta,t)\mapsto (\theta-2\pi t,t)$ is the  homeomorphism on the annulus which generating $Mod(A)$. For a closed surface $S$ and a simple closed curve $\alpha$. Take the tubular neighborhood $N$ of $\alpha$ in $S$ and an orientation-preserving homeomorphism $\phi:A\to N$, we could construct an homeomorphism $T_\alpha:S\to S$ such that
\[	T_\alpha(x)=\left\{
\begin{array}{ll}
	\phi\circ T\circ\phi^{-1},& if \ x\in N,\\
	x,& if\  x\notin N.
\end{array}
\right.
\]

The construction  of $T_\alpha$ relies on the choice of $N$ and $\phi$, while its isotopy class is independent of $N$ by the uniqueness of $N$ up to homeomorphism. It is also independent of representation of the isotopy class of $\alpha$. After we use $a$ to denote the isotopy class of $\alpha$, it is well-defined to see $T_a$ as an element of $\Mod(S)$, which is called as the Dehn twist along $a$. It is topologically equivalent to cutting $S$ along $\alpha$, rotating $2\pi$ of the tubular neighborhood of one boundary and reglue.
	
\begin{figure}[htbp]
	\centering
	\includegraphics[scale=0.9]{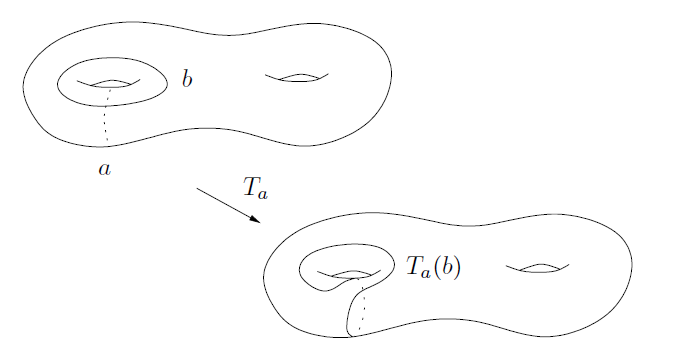}
	\caption{Cutting along $a$, twist and reglue of $T_a$ \cite{Fa-Ma}}
\end{figure}

If $b$ and $a$ are non-intersecting simple closed curves in $S$, i.e. $i(a,b)=0$, then $T_a(b)=b$; if $i(a,b)>1$, there is also an explicit way to see $T_a$ \cite{Fa-Ma}; If $a$ is not isotopic to one point or any boundary component, then $T_a\in \Mod(S)$ is non-trivial as long as $a$ is not isotopic to one point or to some component of boundary. 

The following theorem shows about the generating set of $\Mod(S)$.
\begin{theorem}[Dehn-Lickorish]
	$\Mod(S_g)$ is generated by finitely many Dehn twists.
\end{theorem}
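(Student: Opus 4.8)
The plan is to argue by induction on the genus $g$, the engine of the proof being the action of $\Mod(S_g)$ on isotopy classes of simple closed curves. Write $N\le\Mod(S_g)$ for the subgroup generated by all Dehn twists; the aim is to show $N=\Mod(S_g)$ and that in fact finitely many twists suffice.

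\textbf{Transitivity.} First I would prove that $N$ acts transitively on the set of isotopy classes of non-separating simple closed curves. The local input is that if $i(a,b)=1$ then $T_aT_b(a)=b$ as unoriented isotopy classes, so $a$ and $b$ lie in a single $N$-orbit. Given arbitrary non-separating $c$ and $c'$, one produces a chain $c=c_0,c_1,\dots,c_k=c'$ of non-separating curves with $i(c_{j-1},c_j)=1$; this amounts to the connectivity of the complex whose vertices are non-separating curves and whose edges record single intersections, proved by putting representatives in minimal position and performing surgeries that decrease $i(c,c')$. Concatenating the elementary moves produces an element of $N$ carrying $c$ to $c'$.

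\textbf{Reduction to a stabilizer and cutting.} Fix a non-separating curve $a$. For $f\in\Mod(S_g)$ the image $f(a)$ is again non-separating, so by transitivity some $h\in N$ has $hf(a)=a$; hence $\Mod(S_g)=N\cdot\Mod(S_g,a)$, and it remains to generate the stabilizer $\Mod(S_g,a)$ by finitely many Dehn twists. Cutting $S_g$ along $a$ yields a connected surface of genus $g-1$ with two boundary circles; a finite-index subgroup of $\Mod(S_g,a)$ (the part fixing the orientation of $a$ and its two sides) maps onto the mapping class group of this cut surface with kernel $\langle T_a\rangle$. Capping the two boundary circles with once-punctured disks and then applying the Birman exact sequences to forget the punctures expresses this group, up to finitely many further Dehn twists, in terms of $\Mod(S_{g-1})$ — here one uses that point-pushing along an embedded loop equals a product $T_cT_{c'}^{-1}$ of two boundary twists of an annular neighbourhood of the loop. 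The inductive hypothesis then finishes the step; the base cases are immediate, since $\Mod(S_0)=1$ and $\Mod(T^2)\cong\SL(2,\mathbb{Z})$ is generated by the two matrices realized by Dehn twists $T_a,T_b$ with $i(a,b)=1$, as already recorded.

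\textbf{Main obstacle.} The delicate part is not the curve-complex connectivity of the first step but the bookkeeping of the second: one must track precisely how $\Mod$ changes under cutting along $a$ and under capping and forgetting basepoints, close up the finite-index gap coming from the sides and orientation of $a$, and verify that every kernel appearing along the way — the twist subgroup $\langle T_a\rangle$ and the point-pushing subgroups in the Birman sequences — is genuinely generated by Dehn twists of $S_g$, so that the conclusion ``finitely many twists suffice'' propagates cleanly through the induction.
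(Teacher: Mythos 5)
The paper does not prove this statement: it is quoted as a classical theorem of Dehn and Lickorish, with references to \cite{Dehn} and \cite{Lick} and to the exposition in \cite{Fa-Ma}. Your outline is the standard modern proof of exactly that result (essentially Chapter 4 of Farb--Margalit): the identity $T_aT_b(a)=b$ for $i(a,b)=1$, connectivity of the complex of non-separating curves to get transitivity of the twist subgroup, reduction to the stabilizer of a non-separating curve, and the cutting/capping/Birman machinery plus induction on genus. The skeleton is correct and you have correctly identified where the real work lies. Three points deserve care if you were to write it out. First, the induction cannot stay within closed surfaces: cutting along $a$ produces a surface with boundary, so the statement actually being proved by induction must be about $\Mod$ of surfaces with finitely many punctures or boundary components (a double induction on genus and the number of marked points), and only at the end does one specialize to $S_g$. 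Second, the cutting homomorphism goes from the mapping class group of the cut surface \emph{into} the stabilizer (extend by the identity), with kernel generated by the difference of the two boundary twists, rather than the stabilizer mapping onto the cut group with kernel $\langle T_a\rangle$ as you wrote; this is a fixable misstatement of Proposition 3.20 of \cite{Fa-Ma}. Third, to close the finite-index gap in the stabilizer you must exhibit explicit products of twists realizing the orientation reversal of $a$ (e.g.\ $T_bT_aT_b$ with $i(a,b)=1$), so that the coset representatives themselves lie in the twist subgroup; without this the conclusion that finitely many twists generate does not propagate. None of these is a fatal gap, but each is a place where the sketch, as written, would not compile into a proof.
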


It is firstly proved by Dehn in 1937 that $\Mod(S_g)$ is generated by $2g(g-1)$ Dehn twists\cite{Dehn}. Then Lickorish independently proved that $\Mod(S_g)$ could be generated by $3g-1$ Dehn twists, which is illustrated in the following picture \cite{Lick}. In 1979, Humphiries proves that $\Mod(S_g)$ could be generated by $2g+1$ Dehn twist, and the least number of Dehn twists of one generating set of $\Mod(S_g)$ is indeed $2g+1$.

\begin{figure}[htbp]
	\centering
	\includegraphics[scale=0.9]{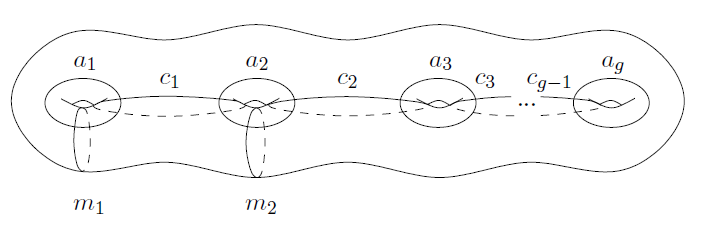}
	\caption{$3g-1$ Dehn twist generating$S_g$ \cite{Fa-Ma}}
\end{figure}

We are more concerning about the $\Mod(S_g)$-action on the first homology of surface. Suppose  $\phi\in \Homeo^+(S_g)$ inducing an isomorphism $\phi_*:H_1(S_2g;\mathbb{Z})\to H_1(S_2g;\mathbb{Z})$. If $\phi\thicksim\psi$ is equivalent in $\Mod(S_g)$, then it is natural that $\phi_*=\psi_*$. Hence each element in $\Mod(S_g)$ acts on $H_1(S_g;\mathbb{Z})$, which means that there is a representation $\Phi:\Mod(S_g)\to \Aut(H_1(S_g;\mathbb{Z}))$. We also consider the natural symplectic structure over $H_1(S_g;\mathbb{R})$ such that $\{x_1,y_1,\cdots,x_n,y_n\}$ forms a set of basis of $\mathbb{R}^{2g}$. Then the standard symplectic form on the dual space $(\mathbb{R}^{2g})^\star$ is the 2-form
$\omega=\sum_{i=1}^{g}dx_i\wedge dy_i.$
For any two vectors $v=(v_1,w_1,\cdots,v_g,w_g)$ and $v'=(v_1',w_1',\cdots,v_g',w_g')$ in $\mathbb{R}^{2g}$, we could compute
$$w(v,v')=\sum_{i=1}^{g}(v_iw_i'-v_i'w_i).$$
Then $w$ is the unique non-degenerated alternating bi-linear form after fixing the basis. The linear symplectic group $\Sp(2g,\mathbb{R})$ is the linear transformation of $\mathbb{R}^{2g}$ which preserves $w$. This definition could also be expressed in the matrix way.
$$\Sp(2g,\mathbb{R})=\{A\in \GL(2g,\mathbb{R}):A^\star\omega=\omega\}=\{A\in \GL(2g,\mathbb{R}):A^TJA=J)\},$$
where $J$ is the $2g\times 2g$ integer matrix
$$J=\begin{pmatrix}
	0&1&0&0&\cdots&0&0\\
	-1&0&0&0&\cdots&0&0\\
	
	0&0&0&1&\cdots&0&0\\
	0&0&-1&0&\cdots&0&0\\
	\vdots&\vdots&\vdots&\vdots&\ddots&\vdots \\
	
	0&0&0&0&\cdots&0&1\\
	0&0&0&0&\cdots&-1&0\\
	
\end{pmatrix}.$$
Then we could define the integer symplectic matrix $\Sp(2g,\mathbb{Z})=\Sp(2g,\mathbb{R})\cap \GL(2g,\mathbb{Z})$ whose matrix clearly has determinant $1$.

Now we go back to the representation $\Phi:\Mod(S_g)\to \Aut(H_1(S_g;\mathbb{Z}))\cong \GL(2g,\mathbb{Z})$ such that every element $f\in \Mod(S_g)$ preserves the lattice $H_1(S_g;\mathbb{Z})$ in $H_1(S_g;\mathbb{R})$, hence $\Phi(\Mod(S_g))\subset \SL(2g,\mathbb{Z})$. On the other hand, $f$ also preserves the 2-form over $H_1(S_g;\mathbb{R})$ which is defined by the algebraic intersection number of simple closed curves of surface. Hence $\Phi(\Mod(S_g))\subset \Sp(2g,\mathbb{R})$, which means that what we actually concern is the symplectic representation 
$$\Phi:\Mod(S_g)\to \Sp(2g,\mathbb{Z}).$$

The fundamental symplectic matrices of $\Sp(2g,\mathbb{Z})$ are defined as followed. Suppose $\sigma $ is an alternation of $\{1,2,\cdots,2g\}$ which permutes every tuple of $2i$ and $2i-1$ for $i=1,2,\cdots,g$. Then the fundamental symplectic matrix $${\rm SE}_{ij}=\left\{
\begin{array}{cc}
	I_{2g}+e_{ij} & if\ i=\sigma(j),\\
	I_{2g}+e_{ij}-(-1)^{i+j}e_{\sigma(j)\sigma(i)} & if\ i\neq\sigma(j),
\end{array}
\right.
$$
The matrix $e_{ij}$ is the matrix that the $(i,j)$-entry equal to 1 and any others are 0 when $i\neq j$. Then following theorem gives a generating set of integer symplectic matrices.
\begin{theorem}[\cite{Sp-generator}]
	$\Sp(2g,\mathbb{Z})$ is generated by all the fundamental symplectic matrices $\{{\rm SE}_{ij}\}$. 
\end{theorem}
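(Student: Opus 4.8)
The plan is to imitate the classical proof that $\SL(n,\mathbb{Z})$ is generated by elementary matrices, performing a ``symplectic Gaussian elimination'' adapted to the form $J$. Write $e_1,f_1,\dots,e_g,f_g$ for the standard symplectic basis, so that in the notation of the theorem $e_k$ is the $(2k-1)$-st and $f_k$ the $(2k)$-th basis vector, and regard the $\mathrm{SE}_{ij}$ as acting on column vectors on the left. Two families of moves are available among the $\mathrm{SE}_{ij}$ and their inverses: the transvections $T_k(t)$ and $T_k'(t)$ inside a single hyperbolic plane, sending $f_k\mapsto f_k+te_k$ (resp.\ $e_k\mapsto e_k+tf_k$) and fixing the rest of the basis, which are exactly the generators with $i=\sigma(j)$; and the ``mixed'' moves $G_{kl}(t)$ for $k\neq l$, which simultaneously add $te_k$ to the $e_l$-coordinate and $\mp tf_l$ to the $f_k$-coordinate, the sign being precisely the one that keeps $J$ invariant, which are the generators with $i\neq\sigma(j)$. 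I will also use the element $w_k:=T_k(1)\,T_k'(-1)\,T_k(1)$, which realises the Weyl move $e_k\leftrightarrow -f_k$ and is therefore a product of fundamental symplectic matrices.

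The argument proceeds by induction on $g$. The base case $g=1$ is the classical fact that $\Sp(2,\mathbb{Z})=\SL(2,\mathbb{Z})$ is generated by $\bigl(\begin{smallmatrix}1&1\\0&1\end{smallmatrix}\bigr)$ and $\bigl(\begin{smallmatrix}1&0\\1&1\end{smallmatrix}\bigr)$, proved by the Euclidean algorithm on the first column. For the inductive step, take $M\in\Sp(2g,\mathbb{Z})$ and set $v=Me_1$, a primitive integer vector. The core step is the lemma that the subgroup generated by the $\mathrm{SE}_{ij}$ acts transitively on primitive vectors in $\mathbb{Z}^{2g}$; granting it, choose a product $E$ of fundamental symplectic matrices with $EMe_1=e_1$. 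One proves the lemma by a Euclidean-algorithm sweep: the mixed moves $G_{1l}(t)$ let one run the Euclidean algorithm between a coordinate in the first hyperbolic plane and a coordinate outside it, so one transports the gcd of the outer coordinates into the first plane and kills the outer coordinates pair by pair (inserting a $w_l$ whenever the coordinate one wishes to use as a pivot vanishes but its partner does not), and then inside the first plane $T_1,T_1'$ and the $2\times2$ Euclidean algorithm, together with primitivity of $v$, bring $v$ to $e_1$.

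Having arranged $EMe_1=e_1$, look at $EMf_1$. The symplectic relation $\omega(EMe_1,EMf_1)=\omega(e_1,f_1)=1$ forces the $f_1$-coordinate of $EMf_1$ to equal $1$, and then transvections $T_1,T_1'$ and mixed moves $G_{1l}$ that all fix $e_1$ clear the remaining coordinates, producing $E'M$ fixing both $e_1$ and $f_1$. Since $E'M$ is symplectic and fixes $e_1,f_1$, it preserves the $J$-orthogonal complement $\langle e_2,f_2,\dots,e_g,f_g\rangle$ and restricts there to an element of $\Sp(2g-2,\mathbb{Z})$; because the fundamental symplectic matrices of $\Sp(2g-2,\mathbb{Z})$ are exactly the $\mathrm{SE}_{ij}$ with all indices $\geq3$, the induction hypothesis expresses that restriction, hence $E'M$, hence $M$, as a product of $\mathrm{SE}_{ij}^{\pm1}$. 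As this family is closed under inversion inside the subgroup it generates, $M\in\langle\mathrm{SE}_{ij}\rangle$, which is the assertion.

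I expect the transitivity lemma of the second paragraph to be the main obstacle: one has to verify that the Euclidean sweep can always be carried out with the specific generators $\mathrm{SE}_{ij}$ — in particular that the sign conventions baked into the mixed moves never obstruct clearing a coordinate, and that the Weyl elements $w_l$ genuinely suffice to handle the degenerate cases in which a prospective pivot vanishes while its symplectic partner does not. Everything else is bookkeeping resting on the $g=1$ case and the orthogonal-complement reduction.
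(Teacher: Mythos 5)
The paper does not actually prove this statement: it is imported verbatim from Mennicke \cite{Sp-generator}, so there is no in-text argument to compare yours against. Judged on its own, your proposal is the standard symplectic Gaussian elimination, and the skeleton --- induction on $g$, transitivity of $\langle \mathrm{SE}_{ij}\rangle$ on primitive vectors to normalize $Me_1$ to $e_1$, the relation $\omega(EMe_1,EMf_1)=1$ to pin down the $f_1$-coefficient of $EMf_1$, and the reduction to the $J$-orthogonal complement of $\langle e_1,f_1\rangle$ --- is the correct and usual route to this theorem.

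Two points need tightening. First, a small slip: $T_1'$ (which sends $e_1\mapsto e_1+tf_1$) does not fix $e_1$, so it cannot appear on the list of moves used to clear $EMf_1$ after $e_1$ has been normalized; fortunately it is not needed there, since $T_1$ together with the mixed moves whose source index is the $f_1$-coordinate (their compensating entries land back in the $e_1$-coefficient and can be re-cleared by another application of $T_1$) already stabilize $e_1$ and suffice. Second, as you acknowledge, essentially all of the content sits in the transitivity lemma, and your sketch of it is exactly where a referee would push back: each mixed move $G_{1l}(t)$ alters two coordinates simultaneously (the target and the symplectic partner of the source), so the ``Euclidean sweep'' does not literally reduce one coordinate at a time, and the claim that the outer coordinates can be killed ``pair by pair'' requires an explicit bookkeeping argument --- for instance, first using the outer coordinates only to arrange $\gcd(v_1,v_2)=1$ in the first hyperbolic plane, then clearing the outer pairs, then finishing with the $g=1$ case --- together with a verification that the signs $-(-1)^{i+j}$ baked into the $\mathrm{SE}_{ij}$ never obstruct the required subtraction. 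Until that lemma is written out in full, what you have is an accurate outline of the classical proof rather than a complete argument.
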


Burkhardt gave another set of generators of $\Sp(4,\mathbb{Z})$ early in 1890. Suppose $\mathbb{Z}^4$ is the vector space spanned by $x_1,y_1,x_2,y_2$, then $\Sp(4,\mathbb{Z})$ could be generated by following four types of transformations.

$(x_1,y_1,x_2,y_2)\mapsto(x_1+y_1,y_1,x_2,y_2)$,

$(x_1,y_1,x_2,y_2)\mapsto(y_1,-x_1,x_2,y_2)$,

$(x_1,y_1,x_2,y_2)\mapsto(x_1-y_2,y_1,x_2-y_1,y_2)$,

$(x_1,y_1,x_2,y_2)\mapsto(x_2,y_2,x_1,y_1)$.

When $g>2$, we could add transformations which are similar to the last type which forms a generating  set $\Sp(2g,\mathbb{Z})$.

\section{Equivalence of monodromies for $\mathbb{H}^2\times\mathbb{E}^2$-manifolds}

After the building of connection of 4-dimensional geometries and profinite completion involved some geometric group property, we could consider the profinite rigidity of Seifert 4-manifolds. Actually, the counterexamples have been raised based on the work of Funar\cite{Funar} and Hempel\cite{Hem:2014} in whose fundamental group contains $\mathbb{Z}$ as direct factor. But we still care about the Seifert 4-manifolds over hyperbolic 2-orbifolds whose fundamental groups contain the unique $\mathbb{Z}^2$ normal subgroup which is preserved under the isomorphism between profinite completions \cite{MaWa:2022}. The efficient expression of the  4-dimensional Seifert invariants defined by Ue depends on the choice of generators of one's fundamental group, including the action of orbifold genus on fibers which is expressed as monodromy matrices. We compute the equivalence of monodromy of $\mathbb{H}^2\times\mathbb{E}^2$-manifolds and give Theorem \ref{monodromy} such that  there is always a set of generators of $\pi_1(M)$ to make sure the corresponding monodromies are all trivial except for one matrix.

\begin{proof}[\textbf{Proof of Theorem {\rm \ref{monodromy}}. }]
	By assumption that $M$ admits Seifert 4-structure over hyperbolic $2$-orbifold, and we could  suppose this $2$-orbifold is surface $\Sigma_g$ since the cone points don't affect genus. The non-trivial monodromy means there is an associating non-trivial $\mathbb{Z}[\pi_1(\Sigma_g)]$-module structure on $\pi_1(T^2)\cong\mathbb{Z}^2$ which is the fundamental group of the fiber. Then monodromy matrices $A_i,B_i$ are all powers of a common matrix $Q$, which is periodic and conjugate to $\begin{pmatrix}
		0&1\\-1&0
	\end{pmatrix}$ or $\begin{pmatrix}
		1&1\\-1&0
	\end{pmatrix}$ in $\SL_2(\mathbb{Z})$. Fixing a set of generators $\{x_1,y_1,\cdots,x_g,y_g\}$ of $\pi_1(\Sigma_g)$, we suppose the matrices are all first type such that $A_i=Q^{a_i}$ and $B_i=Q^{b_i}$ are powers of $Q$ with order $4$. Then there is a natural correspondence between monodromy matrices and cohomology groups.
	$$\begin{array}{ccc}
		\{\mbox{The monodromy matrices of } M\}&\longrightarrow & H_1(\Sigma_g;\mathbb{Z}/4\mathbb{Z})\cong(\mathbb{Z}/4\mathbb{Z})^{2g}\\
		(A_1,B_1,\cdots,A_g,B_g)&\longmapsto&(a_1,b_1,\cdots,a_g,b_g)
	\end{array}$$

	By taking another set of generators of $\Sigma_g$ is the existence of $\phi\in \Mod(\Sigma_g)$ such that $\phi(x_1),\phi(y_1),\cdots,\phi(x_g),\phi(y_g)$ generates $\pi_1(\Sigma(g))$. Then the associated powers of their monodormies $(a'_1,\cdots,b'_g)$ and $(a_1,\cdots,b_g)$ satisfy that there exists a sympletic matrix $P\in \Sp(2g,\mathbb{Z}/4\mathbb{Z})$ such that
	$$(a'_1,\cdots,b'_g)^T=P\cdot(a_1,\cdots,b_g)^T,$$
	which is indeed the equivalence condition of monodromy under different generators. Then the reclassfying of monodromies of  $M$ has been transfer to compute the orbit of $H_1(\Sigma_g;\mathbb{Z}/4\mathbb{Z})$ under the action of $\Sp(2g,\mathbb{Z}/4\mathbb{Z})$. Since the finite cyclic coefficient group of cohomology of surface group, the first and second type of transformation given by Burkhardt guarantee that any tuple  $(a_1,b_1,\cdots,a_g,b_g)\in(\mathbb{Z}/4\mathbb{Z})^{2g}$could always been transfer to $(a'_1,0,\cdots,a'_g,0)$  to make all the $b_i=0$, i.e. there exists $P_1\in \Sp(2g,\mathbb{Z}/4\mathbb{Z})$ such that $P_1\cdot(a_1,b_1,\cdots,a_g,b_g)^T=(a'_1,0,\cdots,a'_g,0)$. Then again the 3rd and 4th type of transformation make every entry equals to $0$ except for the first one, i.e. there exists $P_2\in \Sp(2g,\mathbb{Z}/4\mathbb{Z})$ such that $P_2\cdot(a'_1,0,\cdots,a'_g,0)^T=(a''_1,0,\cdots,0,0).$
\end{proof}

This theorem shows us that the possible monodromy falls into a very limited range that the non-trivial  $\mathbb{Z}[\pi_1(\Sigma_g)]$-module  $\pi_1(T^2)\cong\mathbb{Z}^2$ is quite strict. 

\section{Profinite completion and Seifert 4-manifolds}
We now jump to the situation when all the monodromy matrices are trivial and $M$ admits $\widetilde{\SL_2}\times\mathbb{E}$ geometry.
The rational Euler number is defined exactly in this situation which is also relied on the framing of fiber. We prove that the profinite completion determines the rational Euler number up to the natural $\SL(2,\mathbb{Z})$-action.

\begin{theorem}\label{euler}
	Suppose $M$ and $N$ are two Seifert 4-manifolds with  $\widetilde{\SL_2}\times\mathbb{E}$ geometry and isomorphic profinite completions. Then there exists sets of generators of $\pi_1(M)$ and $\pi_1(N)$ separately such that the accordingly $e(M)=e(N)=(0,c)\in\mathbb{Q}^2$.
\end{theorem}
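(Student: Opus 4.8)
The plan is to reformulate everything in terms of the central extension underlying $M$. Since all monodromies are trivial, $\pi_1(M)$ fits in a central extension $1\to\mathbb{Z}^2\to\pi_1(M)\xrightarrow{p}\Gamma\to1$ with $\Gamma=\pi_1^{\orb}(B)$ a cocompact (hyperbolic) Fuchsian group, classified by $\epsilon_M\in H^2(\Gamma;\mathbb{Z}^2)=H^2(\Gamma;\mathbb{Z})^{\oplus2}$, and by definition the rational Euler number $e(M)\in\mathbb{Q}^2$ is the image of $\epsilon_M$ under the rationalization $H^2(\Gamma;\mathbb{Z})^{\oplus2}\to H^2(\Gamma;\mathbb{Q})^{\oplus2}\cong\mathbb{Q}^2$, where $H^2(\Gamma;\mathbb{Q})\cong\mathbb{Q}$ because $\Gamma$ is a rational $PD_2$-group. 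A change of framing of the general fibre $T^2$ replaces $\epsilon_M$ by its image under an element of $\SL(2,\mathbb{Z})$ acting on the $\mathbb{Z}^2$-coefficients, hence replaces $e(M)$ by its image under the standard $\SL(2,\mathbb{Z})$-action on $\mathbb{Q}^2$. So the assertion to be proved is that $e(M)$ and $e(N)$ lie in a common $\SL(2,\mathbb{Z})$-orbit, and then it suffices to note that every nonzero $\SL(2,\mathbb{Z})$-orbit in $\mathbb{Q}^2$ contains a (unique up to sign) vector of the form $(0,c)$.

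Next I would pass to profinite completions. As finitely generated Fuchsian groups are good \cite{GJZZ:2008} and $\mathbb{Z}^2$ is good, $\pi_1(M)$ is good and profinite completion preserves the exactness of the central extension, producing $1\to\widehat{\mathbb{Z}}^2\to\widehat{\pi_1(M)}\to\widehat{\Gamma}\to1$ with class $\widehat{\epsilon_M}$ equal to the image of $\epsilon_M$ under $H^2(\Gamma;\mathbb{Z}^2)\to H^2(\widehat{\Gamma};\widehat{\mathbb{Z}}^2)$. By \cite{MaWa:2022} an isomorphism $f\colon\widehat{\pi_1(M)}\xrightarrow{\sim}\widehat{\pi_1(N)}$ carries the canonical fibre subgroup $\widehat{\mathbb{Z}}^2$ of $M$ onto that of $N$; hence $f$ restricts to an automorphism $\alpha\in\GL(2,\widehat{\mathbb{Z}})$ of the fibre and descends to an isomorphism $\bar f\colon\widehat{\Gamma_M}\xrightarrow{\sim}\widehat{\Gamma_N}$ (so by Fuchsian profinite rigidity \cite{BCR:2015} the two base orbifolds coincide). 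Naturality of extension classes under pullback and pushout then yields $\bar f^{*}\widehat{\epsilon_N}=\alpha_{*}\widehat{\epsilon_M}$ in $H^2(\widehat{\Gamma_M};\widehat{\mathbb{Z}}^2)$.

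Now I would rationalize this identity. The free part of $H^2(\widehat{\Gamma};\widehat{\mathbb{Z}})$ is isomorphic to $\widehat{\mathbb{Z}}$, namely the profinite completion of the rank-one free summand of $H^2(\Gamma;\mathbb{Z})$, and under the induced map $H^2(\widehat{\Gamma};\widehat{\mathbb{Z}})^{\oplus2}\to(\widehat{\mathbb{Z}}\otimes\mathbb{Q})^{2}$ the class $\widehat{\epsilon_M}$ maps to $e(M)\in\mathbb{Q}^2\subset(\widehat{\mathbb{Z}}\otimes\mathbb{Q})^2$, and likewise for $N$. Because $\bar f$ is an isomorphism of profinite groups it acts on the rank-one module $H^2(\widehat{\Gamma};\widehat{\mathbb{Z}})/\mathrm{tors}\cong\widehat{\mathbb{Z}}$ by multiplication by a unit $\mu\in\widehat{\mathbb{Z}}^{\times}$, while $\alpha$ acts on the coefficient $\mathbb{Z}^2$ by its matrix; so the displayed identity becomes $\mu\,e(N)=\alpha\,e(M)$, i.e.\ $e(N)=\beta\,e(M)$ with $\beta:=\mu^{-1}\alpha$, which still has entries in $\widehat{\mathbb{Z}}$ and determinant $\mu^{-2}\det\alpha\in\widehat{\mathbb{Z}}^{\times}$, hence $\beta\in\GL(2,\widehat{\mathbb{Z}})$. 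Finally, clearing a common denominator $D\in\mathbb{Z}_{>0}$ gives integer vectors $w_M=D\,e(M)$, $w_N=D\,e(N)$ with $w_N=\beta w_M$; working prime by prime in $\GL(2,\widehat{\mathbb{Z}})=\prod_p\GL(2,\mathbb{Z}_p)$ and using the Smith normal form over each $\mathbb{Z}_p$, the $\GL(2,\widehat{\mathbb{Z}})$-orbit of a nonzero integer vector is determined by the gcd of its entries, so $\gcd(w_M)=\gcd(w_N)=:d$. Over $\mathbb{Z}$, some element of $\SL(2,\mathbb{Z})$ carries any integer vector of content $d$ to $(0,d)$, so after framing changes of the fibres of $M$ and of $N$ realizing these elements we get $e(M)=e(N)=(0,d/D)$, which is nonzero by the $\widetilde{\SL_2}\times\mathbb{E}$ hypothesis, proving the theorem with $c=d/D$.

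The main obstacle is the rationalization step in the third paragraph: since the orbifold group $\Gamma$ has torsion, $H^2(\Gamma;\mathbb{Z})$ and $H^2(\widehat{\Gamma};\widehat{\mathbb{Z}})$ are not free, and one must carefully isolate their rank-one free parts, check that $\widehat{\epsilon_M}$ rationalizes to the honest rational Euler number $e(M)$, and verify that $\bar f^{*}$ restricts to multiplication by a genuine unit of $\widehat{\mathbb{Z}}$ on that part. I expect this is cleanest to carry out by working with $\mathbb{Z}/m$-coefficients for $m$ coprime to the cone orders and to $D$, tracking the $\mathbb{Z}/m$-reduction of $\epsilon_M$ (which is detected by a finite quotient of $\pi_1(M)$ and therefore preserved by $f$ up to $\alpha\bmod m$ and $\bar f\bmod m$), and then taking an inverse limit over such $m$; everything else is formal.
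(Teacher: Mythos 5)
Your argument is sound in outline but takes a genuinely different route from the paper at the decisive step. Both proofs share the same setup: the profinite isomorphism preserves the fibre $\widehat{\mathbb{Z}}^2$ (the nilradical), descends to $\bar f$ on the base, Fuchsian profinite rigidity identifies the two orbifolds, and a change of framing acts on $e$ through $\SL(2,\mathbb{Z})$, so everything reduces to showing that $e(M)$ and $e(N)$ lie in a single $\SL(2,\mathbb{Z})$-orbit. From there the paper passes to the smallest finite cover $M'$ whose base is a surface, where $e(M')=d\cdot e(M)$ is an integer vector, matches it with a corresponding index-$d$ normal subgroup of $\pi_1(N)$, and appeals to the profinite invariance of $b_1$ to force $e(N')=e(M')$ (in fact one needs the torsion of $H_1$ as well, since $b_1$ alone only detects whether the Euler class vanishes, not the value of $z$). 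You instead stay on the orbifold level: naturality of the extension class gives $\bar f^{*}\widehat{\epsilon_N}=\alpha_{*}\widehat{\epsilon_M}$, the action of $\bar f^{*}$ on the rank-one free quotient of $H^2(\widehat{\Gamma};\widehat{\mathbb{Z}})$ is multiplication by a unit $\mu$, and the Smith-normal-form argument over $\prod_p\GL(2,\mathbb{Z}_p)$ shows the integer vectors $D\,e(M)$ and $D\,e(N)$ have equal content, hence lie in one $\SL(2,\mathbb{Z})$-orbit. This buys a cleaner endgame --- the gcd is exactly the invariant the paper's $b_1$ step is implicitly extracting, and you avoid producing and identifying the auxiliary covers $M'$, $N'$.

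The one genuine gap is the rationalization step you flag yourself, and it should not be dismissed as formal: you must verify that $\widehat{\epsilon_M}$ maps to a fixed nonzero multiple of $e(M)$ in the free quotient (the multiple being the same for $M$ and $N$ since the bases agree), and that $\bar f^{*}$ really acts there by a scalar unit. This is essentially the content of the Proposition the paper proves afterwards, by writing an explicit free resolution of $\mathbb{Z}$ over $\mathbb{Z}[Q]$ from the Cayley graph, lifting the automorphism to a chain map, and reading off the scalar $\kappa$ on $H^2(\widehat{Q};(\mathbb{Z}/t\mathbb{Z})^2)$; the map $E(\,(p,q)y_0^*-\sum(p_i,q_i)y_i^*\,)=(p,q)\prod m_i-\sum(p_i,q_i)\prod_{j\neq i}m_j$ appearing there is precisely your projection onto the free part. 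Your proposed reduction to $\mathbb{Z}/m$-coefficients for $m$ prime to the cone orders, followed by an inverse limit, is the right way to carry this out, and note that you only need the scalar action on the rank-one free quotient (automatic for any automorphism of a finitely generated $\widehat{\mathbb{Z}}$-module of free rank one), which is strictly weaker than the paper's claim that the action is scalar on all of $H^2$; but until that computation is written down the proof is not complete.
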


\break

\begin{proof}
	
	We denote the base orbifolds of $M$ and $N$ as $B_1$ and $B_2$ respectively. It is already proved that the isomorphism between profinite completions  $\phi:\widehat{{\pi_1(M)}}\to\widehat{{\pi_1(N)}}$ send $\widehat{{\sqrt{\pi_1(M)}}}$ to $\widehat{{\sqrt{\pi_1(N)}}}$ since the nilradical $\sqrt{\pi_1(M)}$ is the unique maximal normal $\widehat{ \mathbb{Z}^2}$-subgroup in $\pi_1(M)$ which is also holds in the profinite analogue. Then $\pi_1(M)$ is actually an extension of $\pi_1^{\orb}(B_1)$ by $\mathbb{Z}^2$ and  $\phi$ induces the isomorphism between the nilradicals $\phi:\widehat{\sqrt{\pi_1(M)}}\to\widehat{\sqrt{\pi_1(N)}}$ along with the isomorphism between their quotients $\bar{\phi}:\widehat{\pi_1^{\orb}(B_1)}\to\widehat{\pi_1^{\orb}(B_2)}$.
	\centerline{\xymatrix{
			1\ar[r]&\widehat{\mathbb{Z}^2} \ar[r]\ar[d]_{\phi} & \widehat{\pi_1(M)}\ar[r]\ar[d]_{\phi}&\widehat{\pi_1^{\orb}(B_1)}\ar[r]\ar[d]_{\bar{\phi}}&1 \\
			1\ar[r]&\widehat{\mathbb{Z}^2} \ar[r] & \widehat{\pi_1(N)}\ar[r]&\widehat{\pi_1^{\orb}(B_2)}\ar[r]&1,
	}}
	\noindent The Fuchsian groups are profinitely rigid among themselves based on the work of \cite{BCR:2015}, which actually shows that $\bar{\phi}:\widehat{\pi_1^{\orb}(B_1)}\to\widehat{\pi_1^{\orb}(B_2)}$ induces the homeomorphism $B_1\cong B_2$ since they are all aspherical.
	
	The group presentation of $\pi_1(M)$ has following form after fixing generators,
	\begin{align*}
		\pi_1(M)=\langle u_1,v_1,\cdots,u_g,v_g,x_1,\cdots,x_r,l,h|\ x_1\cdots x_r\prod[u_1,v_1]\cdots[u_g,v_g]=l^ah^b,& \\x_i^{m_i}l^{a_i}h^{b_i}=1,\ <l,h>\in Z(\pi_1(M))&\rangle
	\end{align*}
	where $u_1,\cdots,v_g$ are corresponding the genus in $B_1$, and $x_1,\cdots,x_r$ are related to the cone points in $B$ whose fiber type is described by Seifert invariants $(m_i,a_i,b_i)$. If we choose different framing of fiber as another set of generators of $\sqrt{\pi_1(M)}$ such that $\sqrt{\pi_1(M)}=\langle l',h'\rangle$ and $(l',h')=(l,h)P$ with $P\in \SL(2,\mathbb{Z})$. Then the relative Seifert invariants satisfy $(a'_i,b'_i)^T=P\cdot(a_i,b_i)^T$ by the basic linear transformations. Since the definition of rational Euler number only relies on Seifert invariants, it is easily to see that $$e'(M)=(a'-\sum\frac{a'_i}{m_i},b'-\sum\frac{b'_i}{m_i})=e(M)\cdot P^T.$$
	Then we can always find an appropriate matrix $P\in \SL(2,\mathbb{Z})$ to control that there is only one entry in $e(M)$ is non-trivial under the framing of $(l,h)\cdot P^T$ because of the rationality of $e(M)$, hence we have
	$$e(M)\equiv (0,b)\in \mathbb{Q}^2\ \modd\  \SL(2,\mathbb{Z}).$$
	
	On the other hand, suppose $M'$ is the covering space of $M$ with smallest index $d$ such that $M'$ is Seifert 4-manifold over some surface. Then the base orbifold $B'$ of $M'$ is actually a surface cover of $B$ with index $d$. We choose the framing of $\sqrt{\pi_1(M')}$ which is coherent with the framing in $\sqrt{\pi_1(M)}$, then we have
	
	$$e(M')=d\cdot e(M)=(0,z)\in \mathbb{Z}^2.$$
	
	In the profinite world, $\overline{\pi_1(M')}$ is a $d$-index closed normal subgroup of $\widehat{{\pi_1(M)}}$. There must exists a $d$-index normal subgroup $\pi_1(N')$ of $\pi_1(N)$ such that $\widehat{{\pi_1(M')}}\cong\widehat{{\pi_1(N')}}$. By the profinite invariance of first betti number $b_1$, there is always a set of generators of $\pi_1(N)$ such that the correspondingly 
	Seifert invariants satisfying $e(N')=e(M')=(0,z)\in\mathbb{Z}^2$, which proves that $$e(M)=\frac{1}{d}\cdot e(M')=\frac{1}{d}\cdot e(N')=e(N)\in\mathbb{Q}^2.$$
\end{proof}

When all the monodromies are trivial, the isomorphism between profinite group induces an action on the cohomology of orbifold fundamental group. Suppose $M$ a $\widetilde{\SL_2}\times\mathbb{E}$-manifold or a $\mathbb{H}^2\times\mathbb{E}^2$-manifold with trivial monodromy over hyperbolic 2-orbifold $B$, then its fundamental group suits a central extension of $\mathbb{Z}^2$ by $\pi_1^{\orb}(B)$. The following analysis is exactly Seifert 4-dimensional version of Wilkes's about 3-dimensional Seifert fibred spaces \cite{Wilk:2017} and in classic group cohomology theory. A central extension of group $Q$ by group $N$ satisfies following short exact sequence
$$1\to N\to G\to Q\to 1,$$
such that the image of $N$ in $G$ lies in the center of $G$. We say two extension $G$ and $G'$ which are central extension of $Q$ by $N$ are equivalent as extension, if they are commuted in following diagram

\centerline{\xymatrix{
		1\ar[r]&N \ar[r]\ar[d]_{Id} & G\ar[r]\ar[d]_{\cong}&Q\ar[r]\ar[d]_{Id}&1 \\
		1\ar[r]&N\ar[r] & G\ar[r]&Q\ar[r]&1
}}
The equivalence class of extension of $N$ by $Q$ is one-to-one correspondent to the elements in $H^2(Q,N)$ from the basic group homology. Then suppose the group presentation of $Q=\langle x_1,\cdots,x_n|r_1,\cdots,r_m\rangle$, and $F=\langle x_1,\cdots,x_n\rangle$ is the free group generated by same $n$ generators, while $R$ is the normal closure of all the relations $\{r_1,\cdots,r_m\}$, which leads to the short exact sequence 
$$1\to R\to F\to Q\to 1,$$
and importantly there is the long exact sequence induced by Serre spectrum sequence
$$0\to H^1(Q;N)\to H^1(F;N)\to (H^1(R;N))^F\to H^2(Q;N)\to H^2(F;N)$$
where $(H^1(R;N))^F$ is the elements in $H^1(R;N)$ which is invariant under the conjugate action of $F$. $$(H^1(R;N))^F\cong H^1(R/[R,F];N)$$
The last term $H^2(F;N)=0$ since $F$ is free, hence the homomorphism $(H^1(R;N))^F\to H^2(Q;N)$ is surjective. For any extension $\xi\in H^2(Q;N)$, we could always consider its preimage $\zeta\in H^1(R/[R,F];N)$ which has more clear way to express. The automorphism groups of $Q$ and $N$ separately induce left/right action on $H^2(Q;N)$, which leads to the core of our question that when would two central extension are isomorphic as groups allowing non-trivial isomorphisms of $Q$ and $N$, and whether these condition maintains under profinite completion and passing to the Seifert invariants.

The group $Q=\pi_1^{\orb}(B)$ is the orbifold fundamental group and $N\cong \mathbb{Z}^2$, then we consider the central extension
$$1\to \mathbb{Z}^2\to \pi_1(M)\to Q\to 1$$ and the correspondence with $H^2(Q;\mathbb{Z}^2)$. By the goodness of 2-orbifold group, the cohomology of $Q$ and its profinite completions are isomorphic when the coefficient is finite $\mathbb{Z}[Q]$-module. Luckily the isomorphism between profinite completions $\widehat{{\pi_1(M)}}\cong\widehat{{\pi_1(N)}}$ preserves nilradicals which are the profinite completions of fibers. On the other hand, $\mathbb{Z}^2$ has unique $t^2$-index subgroup such that the quotient is isomorphic to $(\mathbb{Z}/t\mathbb{Z})^2$ for every $t\in \mathbb{Z}$. Then $\widehat{{\pi_1(M)}}\cong\widehat{{\pi_1(N)}}$ also induces the isomorphism between these $t^2$-index normal subgroup $$G=\widehat{{\pi_1(M)}}/\overline{\langle l^t,h^t\rangle}\cong\widehat{{\pi_1(N)}}/\overline{\langle l'^t,h'^t\rangle}\cong G'$$    
where $G,G'$ is the central extension of $\widehat{Q}$ by $(\mathbb{Z}/t\mathbb{Z})^2$, which is associating to the cohomology elements $\xi_{t},\xi'_{t}\in H^2(\widehat{Q},(\mathbb{Z}/t\mathbb{Z})^2)\cong H^2(Q,(\mathbb{Z}/t\mathbb{Z})^2)$. The action on cohomology induced by automorphisms of $\widehat{Q}$ and $(\mathbb{Z}/t\mathbb{Z})^2$ sends $\xi_{t}$ to $\xi'_{t}$. Also, as two extensions  of $Q$ by $\mathbb{Z}^2$, $\pi_1(M)$and $\pi_1(N)$ are corresponding to two elements $\xi,\xi'\in H^2(Q,\mathbb{Z}^2)$ with following surjection when considering $\xi_{t}$ and $\xi'_{t}$ as elements in $H^2(Q,(\mathbb{Z}/t\mathbb{Z})^2)$.
$$\begin{array}{ccc}
	H^2(Q,\mathbb{Z}^2)&\longrightarrow & H^2(Q,(\mathbb{Z}/t\mathbb{Z})^2)\\
	\xi &\longmapsto&\xi_{t}
\end{array}$$

The group presentation of $\pi_1(M)$ is given in the last section, which is also available for $\mathbb{H}^2\times\mathbb{E}^2$-manifold with trivial monodromy. The orbifold fundamental group of base orbifold $B$ is $Q=\pi_1^{\orb}(B)=\langle u_1,\cdots,v_g,x_1,\cdots,x_r|x_i^{m_i}=1,x_1\cdots x_r\prod[u_i,v_i]=1\rangle$. Then the short exact sequence $1\to R\to F\to Q\to 1$ corresponds the generators and generating relations of $Q$. Actually $R/[R,F]$ is the free $\mathbb{Z}$-module spanned by $y_0=x_1\cdots v_g^{-1}$ and $y_i=x_i^{m_i}$ such that $\pi_1(M)$ the cohomology element $H^2(Q,\mathbb{Z}^2)$ representing $\pi_1(M)$ has a preimage in $(H^1(R;\mathbb{Z}^2))^F\cong \Hom(R/[R,F],A)$ as 
$$y_0\mapsto (a,b),\ \ y_i\mapsto (-a_i,-b_i).$$

It should be noted that this correspondence of Seifert 4-manifolds is also suitable for the non-trivial monodromies. Back to the cohomology of groups that an extension of group $G$ by an abelian kernel $A$ is associated to a function $f:G\times G\to A$ which could be proved as a 2-cocycle. 
$$0\to A\to^{i} E\to^{\pi} G\to 1$$ 
After giving a set-theoretical section $s:G\to E$ such that $\pi s=id_{G}$, the definition of $f$ describes the gap of $s$ being a homomorphism, i.e. the extension being split. The concrete definition of $f$ is given by 
$$s(g)s(h)=i(f(g,h))s(gh)$$ for any $g,h\in G$. Then for a classical cross-section $s:G\to E$ such that it maps each generator in the quotient $G$ to $E$ in the canonical way and sends $1_{G}$ to $1_{E}$. Then the definition of $f$ shows exactly that in our problem, a generating relation $y_i=x_{i}^{m_i}$ corresponding to $l^{a_i}h^{b_i}$ in $A=\mathbb{Z}^2$ and $y_0=x_1\cdots v_g^{-1}$ corresponding to $l^{a}h^{b}$ which is useful for further computation.

Now we exam the automorphism of $Q$ with following result to show the genre of specific condition.
\begin{theorem}[\cite{BCR:2015}]
	Suppose $G$ is a finitely generated Fuchsian group. Then every finite subgroup of $\widehat{G}$ is  conjugate some finite subgroup of $G$, what's more, if two maximal finite subgroup of $G$ is conjugate in $\widehat{G}$, then they are already conjugate in $G$.
\end{theorem}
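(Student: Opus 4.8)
The plan is to reduce the statement to the torsion-free case via a finite-index surface subgroup, to handle all Fuchsian groups that split essentially by profinite Bass--Serre theory, and to treat the finitely many rigid orbifolds (the triangle groups) by a direct cohomological comparison. By Selberg's lemma, fix a torsion-free normal subgroup $K\triangleleft G$ of finite index, so $K$ is a surface group (or free, in the non-cocompact case) and $Q:=G/K$ is finite. The first thing I would prove is that the profinite completion of a torsion-free finitely generated Fuchsian group is torsion-free: such a group has cohomological dimension at most $2$ and is good by \cite{GJZZ:2008}, so $\mathrm{cd}_p(\widehat K)\le 2$ for every prime $p$, and a profinite group of finite cohomological dimension at $p$ contains no element of order $p$. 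Hence, for general $G$ and any finite $F\le\widehat G$, we get $F\cap\widehat K=1$ because $\widehat K\triangleleft\widehat G$ is open and torsion-free, so $F$ maps isomorphically onto a subgroup $\bar F\le\widehat G/\widehat K\cong Q$. This disposes of the torsion-free case and reduces everything to finite subgroups that are abstractly subgroups of $Q$.

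Next I would bring in profinite Bass--Serre theory for the existence of a conjugating element. A non-cocompact finitely generated Fuchsian group is a free product of cyclic and infinite cyclic groups, so it is the fundamental group of a finite graph of finite-or-cyclic groups; a cocompact Fuchsian group with torsion, unless its orbifold is a sphere with three cone points, contains an essential simple closed curve and therefore splits as an amalgam or HNN extension over a two-ended subgroup with vertex groups that are Fuchsian of strictly smaller complexity. In every such case $\widehat G$ acts on the associated profinite tree $\widehat T$, with finite quotient graph $G\backslash T$ and vertex stabilizers the closures $\widehat{G_v}$ of the discrete vertex groups; by the structure theory of profinite groups acting on profinite trees (Ribes--Zalesskii, Zalesskii--Mel'nikov) the finite subgroup $F$ fixes a vertex, hence lies in a $\widehat G$-conjugate of some $\widehat{G_v}$, and a descending induction on the complexity of the decomposition (base cases: finite groups, or torsion-free Fuchsian groups handled above) pushes $F$ into a $\widehat G$-conjugate of a finite subgroup of $G$.

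For the uniqueness clause, suppose $F_1,F_2\le G$ are maximal finite subgroups with $gF_1g^{-1}=F_2$ for some $g\in\widehat G$. These are maximal elliptic subgroups, so $F_i$ fixes a vertex $\tilde v_i$ of the Bass--Serre tree $T$; since $\widehat G\backslash\widehat T=G\backslash T$ is finite, $\tilde v_1$ and $\tilde v_2$ lie over the same vertex of the quotient graph, so after modifying $g$ by an element of $G$ we may take $\tilde v_1=\tilde v_2=:\tilde v$. Then $g$ normalizes $\mathrm{Stab}_{\widehat G}(\tilde v)=\widehat{G_{\tilde v}}$, and maximality forces $F_1$ and $F_2$ to be the canonical maximal finite subgroup of $G_{\tilde v}$, so they are already conjugate inside $G_{\tilde v}\le G$ --- one induces down the decomposition exactly as before. (Maximality is essential: without it genuinely new fusion of finite subgroups can appear in $\widehat G$.)

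The remaining, and I expect the hardest, case is that of a triangle group $\langle x,y\mid x^p=y^q=(xy)^r=1\rangle$, for which no essential splitting exists. Here I would argue cohomologically: with $K,Q$ as above, a finite $F\le\widehat G$ with image $\bar F\le Q$ is exactly a continuous complement to $\widehat K$ in the preimage of $\bar F$ in $\widehat G$, and such complements, up to $\widehat K$-conjugacy, are classified by the continuous nonabelian cohomology $H^1(\bar F;\widehat K)=\varprojlim_N H^1(\bar F;K/N)$. The goal is to show, using goodness of $K$ to control the system $\{H^{*}(\bar F;K/N)\}$ uniformly in $N$ together with the fact that the only cone-point data of the triangle orbifold are the three finite cyclic groups $\langle x\rangle,\langle y\rangle,\langle xy\rangle$, that this set matches the abstract classification of complements inside $G$ itself --- which yields both the existence and the uniqueness statements. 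The crux is precisely to verify that, in this rigid situation without a tree to fall back on, passing to the profinite completion creates neither new torsion nor new conjugacy between the existing finite subgroups.
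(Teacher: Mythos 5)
First, a point of order: the paper you are working from does not prove this statement --- it is quoted verbatim from \cite{BCR:2015} as a known input --- so there is no in-paper proof to compare against, and I am judging your proposal against what a complete proof would require. Your opening reduction is correct and standard: goodness of the torsion-free finite-index subgroup $K$ together with $\mathrm{cd}_p(K)\le 2$ gives $\mathrm{cd}_p(\widehat K)\le 2$, hence $\widehat K$ is torsion-free and every finite $F\le\widehat G$ injects into $Q=G/K$. The profinite Bass--Serre portion is a viable (if technically demanding) route for the Fuchsian groups that split: you would need to verify that the splittings are efficient (vertex and edge groups separable and carrying the induced profinite topology, which follows from LERF for Fuchsian groups) before the fixed-vertex theorem for profinite trees applies, but granting that, the existence and non-fusion arguments you sketch can be pushed through.

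The genuine gap is the triangle-group case, which you correctly identify as the crux and then do not prove: your final paragraph ends with ``the goal is to show'' and ``the crux is precisely to verify,'' i.e., with a statement of what remains rather than an argument. Moreover, the mechanism you propose does not follow from goodness as you invoke it. Goodness concerns cohomology with \emph{finite abelian} coefficient modules; it says nothing directly about the nonabelian cohomology sets $H^1(\bar F;K)$ versus $H^1(\bar F;\widehat K)$ that classify conjugacy classes of complements here, nor does it give for free that $\varprojlim_N H^1(\bar F;K/N)$ (where one must restrict to $\bar F$-invariant $N$ and control an inverse system of finite pointed sets) computes $\widehat K$-conjugacy classes of complements in $\widehat G$. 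Proving that the natural map of pointed sets $H^1(\bar F;K)\to H^1(\bar F;\widehat K)$ is a bijection is essentially the entire content of the theorem in this case, and it is exactly the step you have not supplied. For comparison, the argument in the cited source \cite{BCR:2015} avoids your case division altogether: it is a uniform cohomological count --- for $k$ large, $\dim H^{2k}(G;\mathbb{F}_p)$ counts the conjugacy classes of maximal finite subgroups of order divisible by $p$ (periodicity of Farrell--Tate cohomology above the virtual cohomological dimension), goodness transports this count to $\widehat G$, and comparing the two counts yields both the existence statement and the non-conjugacy of distinct maximal finite subgroups simultaneously, triangle groups included. You should either carry out the $H^1$ comparison honestly or replace the rigid case by an argument of that uniform cohomological type.
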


Then following theorem depicts the automorphism of $\widehat{Q}$.

\begin{proposition}[\cite{Wilk:2017}]
	Suppose $B$ is a hyperbolic 2-orbifold with 
	$$\pi_1^{\orb}(B)=\langle x_1,\cdots,x_r,u_1,\cdots,v_g|x_i^{m_i}=1,x_1\cdots x_r\prod[u_i,v_i]=1\rangle, $$ then the automorphism of its profinite completion $\widehat{\pi_1^{\orb}(B)}$ sends $a_i$ to some conjugation of $a_i^{k_i}$ such that $k_i$ is some integer coprime to $m_i$.
\end{proposition}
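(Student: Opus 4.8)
The plan is to recognise $x_i$ (the generator written $a_i$ in the statement) as a generator of a maximal finite cyclic subgroup of $Q=\pi_1^{\orb}(B)$ and to control the image of this subgroup under an automorphism $\Phi$ of $\widehat{Q}$. First I would invoke the structure theory of finitely generated Fuchsian groups for the presentation at hand: every finite subgroup of $Q$ is cyclic (it is the stabiliser of a cone point), every torsion element is conjugate into one of the $\langle x_i\rangle\cong\mathbb{Z}/m_i\mathbb{Z}$, each $\langle x_i\rangle$ is maximal finite in $Q$, and $\langle x_i\rangle$ is conjugate to $\langle x_j\rangle$ in $Q$ only when $i=j$. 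So the conjugacy classes of maximal finite subgroups of $Q$ are indexed by $\{1,\dots,r\}$, the $i$-th one having order $m_i$.

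Next I would transport this to $\widehat{Q}$ using the theorem of Bridson--Conder--Reid quoted just above: every finite subgroup of $\widehat{Q}$ is $\widehat{Q}$-conjugate to a subgroup of $Q$, and maximal finite subgroups of $Q$ that are conjugate in $\widehat{Q}$ are already conjugate in $Q$. The first assertion, together with a short maximality-transfer remark (a strict inclusion of finite subgroups of $Q$ would conjugate to a strict inclusion of finite subgroups of $\widehat{Q}$, so a maximal finite subgroup of $\widehat{Q}$ is a conjugate of a maximal finite subgroup of $Q$), shows that the maximal finite subgroups of $\widehat{Q}$ are exactly the $\widehat{Q}$-conjugates of the $\langle x_i\rangle$; the second assertion shows that distinct $\langle x_i\rangle$ stay non-conjugate in $\widehat{Q}$. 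Hence the conjugacy classes of maximal finite subgroups of $\widehat{Q}$ are again indexed by $\{1,\dots,r\}$, with the same orders.

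Now $\Phi$ preserves finiteness of subgroups, their orders, and the inclusion order, so $\Phi(\langle x_i\rangle)$ is a maximal finite subgroup of $\widehat{Q}$ of order $m_i$; by the previous paragraph it equals $g\langle x_{\tau(i)}\rangle g^{-1}$ for some $g\in\widehat{Q}$ and some permutation $\tau$ of $\{1,\dots,r\}$ with $m_{\tau(i)}=m_i$. The generators of the cyclic group $g\langle x_{\tau(i)}\rangle g^{-1}\cong\mathbb{Z}/m_i\mathbb{Z}$ are precisely the elements $g\,x_{\tau(i)}^{\,k}\,g^{-1}$ with $\gcd(k,m_i)=1$, so $\Phi(x_i)=g\,x_{\tau(i)}^{\,k_i}\,g^{-1}$ for some integer $k_i$ coprime to $m_i$ (only $k_i\bmod m_i$ matters). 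After relabelling the cone points of equal order we may take $\tau=\mathrm{id}$, which is the assertion of the proposition.

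I expect the only genuinely delicate point to be the maximality transfer: one must be sure that $\langle x_i\rangle$, maximal finite in $Q$, remains maximal finite in $\widehat{Q}$, so that $\Phi$ cannot send it into a proper subgroup of a larger finite subgroup of $\widehat{Q}$ coming from a cone point of larger order. Everything else is a routine combination of the Fuchsian structure theory with the Bridson--Conder--Reid control of finite subgroups of the profinite completion, plus the elementary fact that automorphisms act on generators of a finite cyclic group by exponentiation by a unit.
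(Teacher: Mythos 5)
The paper does not actually prove this proposition: it is imported verbatim from Wilkes \cite{Wilk:2017}, with the Bridson--Conder--Reid theorem quoted immediately beforehand as the relevant input. Your reconstruction follows exactly the route of the cited source, and the overall shape is right: identify $\langle x_i\rangle$ (the statement's $a_i$ is a typo for $x_i$) as representatives of the conjugacy classes of maximal finite subgroups of $Q=\pi_1^{\orb}(B)$, transfer this classification to $\widehat{Q}$ via Bridson--Conder--Reid, and use the fact that an automorphism of a finite cyclic group is exponentiation by a unit. Your remark that one can only conclude $\Phi(x_i)$ is a conjugate of $x_{\tau(i)}^{k_i}$ for a permutation $\tau$ preserving the orders $m_i$ is also faithful to Wilkes's actual statement; the paper silently suppresses this permutation.

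The one point you flag as delicate is indeed where your argument, as written, outruns the tools quoted in the paper. The theorem as stated there controls only \emph{maximal} finite subgroups of $Q$ up to conjugacy in $\widehat{Q}$; it does not by itself exclude the scenario in which $\langle x_i\rangle$ becomes non-maximal in $\widehat{Q}$ by being conjugated into a proper subgroup $\langle x_j^{m_j/m_i}\rangle$ of some $\langle x_j\rangle$ with $m_i\mid m_j$, $m_i<m_j$, in which case $\Phi(x_i)$ could a priori be a conjugate of a power of a \emph{different} generator. Your ``maximality-transfer remark'' shows that maximal finite subgroups of $\widehat{Q}$ are conjugates of the $\langle x_i\rangle$, but not the converse implication that each $\langle x_i\rangle$ stays maximal. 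To close this one needs the stronger form of the Bridson--Conder--Reid result (which they do prove, and which Wilkes uses): conjugacy classes of \emph{all} finite subgroups, equivalently of torsion elements, of $Q$ inject into those of $\widehat{Q}$. Granting that, $x_i$ conjugate in $\widehat{Q}$ to $x_j^{s}$ forces $x_i$ conjugate to $x_j^{s}$ in $Q$, which contradicts maximality of $\langle x_i\rangle$ in $Q$ unless $m_i=m_j$ and $\langle x_j^{s}\rangle=\langle x_j\rangle$; your argument then finishes as written. So the proposal is essentially correct and matches the source's proof, but you should cite the conjugacy-separability of torsion statement rather than only the maximal-subgroup version reproduced in this paper.
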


Finally we deal with the action induced on 
$H^2(\widehat{Q},(\mathbb{Z}/t\mathbb{Z})^2)$ and how this action maintains in profinite completions.
\begin{proposition}
	Suppose $\phi$ is an isomorphism of  $\widehat{Q}$ as above. Then  $\phi^*$ induced by $\phi$ has an action on $H^2(\widehat{Q},(\mathbb{Z}/t\mathbb{Z})^2)$ which is actually multiplying a profinite integer $\kappa\in\mathbb{Z}$ such that $1\leq i\leq \kappa$ for any $\kappa\equiv k_i\ \modd\ m_i$.
\end{proposition}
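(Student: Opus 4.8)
The plan is to run the $3$-dimensional argument of Wilkes \cite{Wilk:2017} at the level of a presentation. Since $(\mathbb{Z}/t\mathbb{Z})^{2}\cong\mathbb{Z}/t\mathbb{Z}\oplus\mathbb{Z}/t\mathbb{Z}$ as trivial $\widehat{Q}$-modules and $\phi$ does not act on the coefficients, $\phi^{*}$ acts diagonally, so it is enough to prove that the induced map on $H^{2}(\widehat{Q};\mathbb{Z}/t\mathbb{Z})$ is multiplication by a single $\kappa\in\widehat{\mathbb{Z}}$ with $\kappa\equiv k_{i}\ (\modd\ m_{i})$ for every $1\le i\le r$. Let $\widehat{F}$ be the free profinite group on $x_{1},\dots,x_{r},u_{1},\dots,v_{g}$ and $\widehat{R}=\Ker(\widehat{F}\to\widehat{Q})$. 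By goodness of Fuchsian groups the finite-coefficient cohomology of $\widehat{Q}$ is that of $Q$, and the profinite five-term exact sequence of $1\to\widehat{R}\to\widehat{F}\to\widehat{Q}\to1$ identifies $H^{2}(\widehat{Q};\mathbb{Z}/t\mathbb{Z})$ with the cokernel of the restriction map
\[
H^{1}(\widehat{F};\mathbb{Z}/t\mathbb{Z})\longrightarrow \Hom\bigl(\widehat{R}/\overline{[\widehat{R},\widehat{F}]},\ \mathbb{Z}/t\mathbb{Z}\bigr),
\]
where, completing the discrete computation recalled above, $\widehat{R}/\overline{[\widehat{R},\widehat{F}]}\cong\widehat{\mathbb{Z}}^{\,r+1}$ is topologically freely generated by the class $y_{0}$ of the long relator $\mathcal{R}=x_{1}\cdots x_{r}\prod_{j}[u_{j},v_{j}]$ and the classes $y_{i}$ of $x_{i}^{m_{i}}$, and restriction sends $\psi\mapsto\bigl(\sum_{i}\psi(x_{i})\bigr)y_{0}^{*}+\sum_{i}m_{i}\psi(x_{i})\,y_{i}^{*}$ in the dual basis.

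Next I would lift $\phi$. By the preceding proposition $\phi(x_{i})=g_{i}x_{i}^{k_{i}}g_{i}^{-1}$ with $g_{i}\in\widehat{Q}$ and $k_{i}$ coprime to $m_{i}$; picking lifts $\widetilde{g}_{i}\in\widehat{F}$ of the $g_{i}$ and arbitrary lifts of $\phi(u_{j}),\phi(v_{j})$ yields a continuous endomorphism $\widetilde{\phi}$ of $\widehat{F}$ over $\phi$, which carries $\widehat{R}$ into itself and hence induces $\bar{\phi}$ on $\widehat{R}/\overline{[\widehat{R},\widehat{F}]}\cong\widehat{\mathbb{Z}}^{\,r+1}$. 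Because conjugation by $\widehat{F}$ is trivial on this module, $\widetilde{\phi}(x_{i}^{m_{i}})=\widetilde{g}_{i}(x_{i}^{m_{i}})^{k_{i}}\widetilde{g}_{i}^{-1}$ gives $\bar{\phi}(y_{i})=k_{i}y_{i}$. To pin down $\bar{\phi}(y_{0})$ I would project to $\widehat{F}^{\mathrm{ab}}$: there $y_{0}\mapsto\sum_{i}e_{x_{i}}$ and $y_{i}\mapsto m_{i}e_{x_{i}}$, while $\widetilde{\phi}(\mathcal{R})$ maps to $\sum_{i}k_{i}e_{x_{i}}$, since the conjugators cancel in the abelianization and the commutators of the (uncontrolled) lifts of $\phi(u_{j}),\phi(v_{j})$ die. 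Writing $\bar{\phi}(y_{0})=\kappa y_{0}+\sum_{i}c_{i}y_{i}$ with $\kappa,c_{i}\in\widehat{\mathbb{Z}}$ and comparing images in $\widehat{F}^{\mathrm{ab}}$ forces $\kappa+c_{i}m_{i}=k_{i}$ for all $i$; in particular $\kappa\equiv k_{i}\ (\modd\ m_{i})$, and the compatibility of these congruences needs no separate check, being a consequence of the fact that $\widetilde{\phi}(\mathcal{R})$ genuinely lies in $\widehat{R}$, so its abelianized image lies in the image of $\widehat{R}$ in $\widehat{F}^{\mathrm{ab}}$.

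Finally I would transport this to $H^{2}$. The map $\phi^{*}$ is induced by precomposition with $\bar{\phi}$ on $\Hom(\widehat{R}/\overline{[\widehat{R},\widehat{F}]},\mathbb{Z}/t\mathbb{Z})$ followed by passage to the cokernel, so in the dual basis one gets $\phi^{*}(y_{0}^{*})=\kappa y_{0}^{*}$ and $\phi^{*}(y_{j}^{*})=c_{j}y_{0}^{*}+k_{j}y_{j}^{*}$. But in $H^{2}(\widehat{Q};\mathbb{Z}/t\mathbb{Z})$ the class $y_{0}^{*}+m_{j}y_{j}^{*}$ is zero, being the restriction of the character $x_{j}\mapsto1$, and $k_{j}-c_{j}m_{j}=\kappa$, so there $\phi^{*}(y_{j}^{*})=(k_{j}-c_{j}m_{j})y_{j}^{*}=\kappa y_{j}^{*}$ as well. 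Hence $\phi^{*}$ is multiplication by $\kappa$ on $H^{2}(\widehat{Q};\mathbb{Z}/t\mathbb{Z})$, and diagonally by $\kappa$ on $H^{2}(\widehat{Q};(\mathbb{Z}/t\mathbb{Z})^{2})$, which is the assertion. The main obstacle I anticipate is foundational rather than computational: justifying the profinite five-term sequence together with $\widehat{R}/\overline{[\widehat{R},\widehat{F}]}\cong\widehat{\mathbb{Z}}^{\,r+1}$, and checking carefully that the uncontrolled lifts of $\phi(u_{j}),\phi(v_{j})$ really drop out of every abelianization that enters the argument; once that is secured, the two identities for $\bar{\phi}$ and for $\phi^{*}$ are short.
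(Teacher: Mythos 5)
Your argument is correct and is essentially the computation the paper performs: you package it through the five-term exact sequence of $1\to\widehat{R}\to\widehat{F}\to\widehat{Q}\to 1$, whereas the paper makes the same cokernel description concrete via Wilkes's explicit Cayley-graph resolution $C_3\to C_2\to C_1\to C_0$, but the key identities --- $\bar{\phi}(y_i)=k_iy_i$, $\bar{\phi}(y_0)=\kappa y_0+\sum_i\mu_iy_i$ with $\kappa+m_i\mu_i=k_i$, and the coboundary $y_0^*+m_iy_i^*$ absorbing the off-diagonal terms so that $\phi^*$ is multiplication by $\kappa$ --- are exactly the paper's. I see no gap, and your closing remark correctly identifies the only foundational points (the profinite five-term sequence, the structure of $\widehat{R}/\overline{[\widehat{R},\widehat{F}]}$, and the vanishing of the conjugators and commutators in the abelianization) that need to be secured, all of which the paper likewise takes from \cite{Wilk:2017}.
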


\begin{proof}
	This proposition is largely depended on the 
	Suppose $Q=\pi_1^{\orb}(B)$ also has group presentation as above. Then by the work of Wilkes \cite{Wilk:2017}, there is a projective resolution of $\mathbb{Z}$ by free $\mathbb{Z}[Q]$-module $$C_3\to C_2\to C_1\to C_0\to\mathbb{Z}.$$ This whole construction is based on the Cayley graph of $Q$ with its natural $\mathbb{Z}[Q]$-action. 
	
	Suppose $F=F(x_i,u_i,v_i)$ is the free group generated by same generators of $Q$ and $R=\Ker(F\to Q)$, then a potential projective resolution is constructed by setting $C_0=\mathbb{Z}Q$ which is a free $\mathbb{Z}$-module  generated by all the points in $\Cay(Q)$. The left translation of $\Cay(Q)$ acted by $Q$ also lifts to $C_0$. And define $\epsilon:\mathbb{Z}Q\to\mathbb{Z}$ is the evaluation map.
	
	Let $C_1=\mathbb{Z}Q\{\bar{x}_i,\bar{u}_j,\bar{v}_j\}$ generated by all the edges in $\Cay(Q)$ with start point $1$. Then $C_1$ is a free $\mathbb{Z}[Q]$-module generated by all $\bar{x}_i,\bar{u}_j,\bar{v}_j$ which is actually representing all the linear combinations of roads in $\Cay(Q)$. Then the partial map sends every road to its endpoint minus start point. It could be easily examined that $\epsilon\dot d_1=0$ and $\Ker(\epsilon)=Im(d_1)$.

	Let $C_2=\mathbb{Z}Q\{y_0,\cdots,y_r\}$ generated by all the relations $y_0=x_1\cdots x_r\prod [u_i,v_i], y_1=x_1^{m_1}\cdots,y_r=x_r^{m_r}$. Then $C_2$ could be seen as all the loops in $\Cay(Q)$ with $Q$-action as left translation. The partial map $d_2:C_2\to C_1$ sends every generator to the sum of all the edges consisting the loop such as 
	\begin{align*}
		d_2(y_i)=&\bar{x}_i+x_i\cdot \bar{x}_i+\cdots+x_i^{m_i-1}\cdot \bar{x}_i\\
		d_2(y_0)=&\bar{x}_1+x_1\cdot \bar{x}_2+x_1x_2\cdot \bar{x}_3+\cdots+x_1\ldots x_{r-1}\cdot \bar{x}_i\\
		&+x_1\ldots x_r\cdot\bar{u}_1+\cdots-x_1\ldots x_r[u_1,v_1]\cdots[u_g,v_g]\bar{v}_g.
	\end{align*}
which also keeps $ker(d_1)=Im(d_2)$.
	
	By some computation, we could construct $C_3=\mathbb{Z}Q\{z_1,\cdots,z_r\}$ such that $z_i$ is corresponding to the relation $y_i$ and  $d_3(z_i)=(x_i-1)\cdot y_i$ with $ker(d_2)=Im(d_3)$, forming a projective resolution of $\mathbb{Z}$ by $\mathbb{Z}Q$-modules.
	
	The profinite modules $\widehat{C_i}$ form a projective resolution of $\widehat{\mathbb{Z}}$ by $\widehat{\mathbb{Z}}[[\widehat{Q}]]$-module with profinite generators corresponding to discrete ones by the work of \cite{Wilk:2017}. Then $\phi$ induces chain maps $\phi_\sharp:\widehat{C_i}\to\widehat{C_i}$ and also homomorphisms $\phi^*$ of cohomology groups.

	To make following diagram commute,
	
	\centerline{\xymatrix{
			\widehat{C_3}\ar[r]^{d_3}&\widehat{C_2} \ar[r]^{d_2}\ar[d]_{\phi_\sharp} & \widehat{C_1}\ar[r]\ar[d]_{\phi_\sharp}&\widehat{C_0} \\
			\widehat{C_3}\ar[r]^{d_3}&\widehat{C_2}\ar[r]^{d_2} & \widehat{C_1}\ar[r]&\widehat{C_0}
	}}
	\noindent We would choose $\phi_\sharp(y_i)=k_ig_i\cdot y_i$ for some $g_i\in\widehat{Q}$ when $1\leq i\leq r$ since $\phi$ sends $x_i$ to some conjugation of $x_i^{k_i}$ in $\widehat{Q}$. 
	
	After applying functor $\widehat{\mathbb{Z}}\otimes_{\widehat{\mathbb{Z}}[[\widehat{Q}]]}-$ on $\widehat{C_i}\to\widehat{\mathbb{Z}}$ to compute $H^2(\Hom_{\widehat{\mathbb{Z}}[[\widehat{Q}]]}(\widehat{C_i},(\mathbb{Z}/t\mathbb{Z})^2))$, we get following commute diagram
	
	\centerline{\xymatrix{
			\widehat{\mathbb{Z}}\otimes_{\widehat{\mathbb{Z}}[[\widehat{Q}]]}\widehat{C_3}\ar[r]^{0}&\widehat{\mathbb{Z}}\{y_0,\cdots,y_r\} \ar[r]^{d'_2}\ar[d]_{\phi_\sharp} & \widehat{\mathbb{Z}}\{\bar{x}_i,\bar{u}_j,\bar{v}_j\}\ar[r]^{0}\ar[d]_{\phi_\sharp}&\widehat{\mathbb{Z}}\otimes_{\widehat{\mathbb{Z}}[[\widehat{Q}]]}\widehat{C_0} \\
			\widehat{\mathbb{Z}}\otimes_{\widehat{\mathbb{Z}}[[\widehat{Q}]]}\widehat{C_3}\ar[r]^{0}&\widehat{\mathbb{Z}}\{\bar{x}_i,\bar{u}_j,\bar{v}_j\}\ar[r]^{d'_2} & \widehat{\mathbb{Z}}\{\bar{x}_i,\bar{u}_j,\bar{v}_j\}\ar[r]^{0}&\widehat{\mathbb{Z}}\otimes_{\widehat{\mathbb{Z}}[[\widehat{Q}]]}\widehat{C_0}
	}}
	\noindent with the diminish of $\widehat{Q}$-action and chain  maps such that 
	\begin{align*}
		d'_2(y_i)=m_ix_i,\ 
		d'_2(y_0)=x_1+\cdots+x_r,\ 
		\phi_\sharp(x_i)=k_i x_i,\ 
		\phi_\sharp(y_i)=k_i y_i.
	\end{align*}
	We still don't know what $\phi_\sharp(y_0)$ looks like, which could be assumed as  $\phi_\sharp(y_0)=\kappa y_0+\sum \mu_iy_i$ where $\kappa,\mu\in\mathbb{Z}$. Then by the commutation of above diagram, it holds for every $1\leq i\leq r$ such that $$\kappa+m_i\mu_i=k_i.$$
	
	Then we apply functor $\Hom_{\widehat{\mathbb{Z}}}(-,(\mathbb{Z}/t\mathbb{Z})^2)$- to the diagram for any $t\in\mathbb{N}$, we could get a new phrase of diagram
	
	\centerline{\xymatrix{
			&	(\mathbb{Z}/t\mathbb{Z})^2\{y^*_0,\cdots,y^*_r\}\ar[l] &
			(\mathbb{Z}/t\mathbb{Z})^2\{\bar{x}^*_i,\bar{u}^*_j,\bar{v}^*_j\}\ar[l]^{d^2} &\ \ar[l]\\
			\ &	(\mathbb{Z}/t\mathbb{Z})^2\{y^*_0,\cdots,y^*_r\}\ar[u]^{\phi^\sharp}\ar[l]&
			(\mathbb{Z}/t\mathbb{Z})^2\{\bar{x}^*_i,\bar{u}^*_j,\bar{v}^*_j\}\ar[l]^{d^2}\ar[u]^{\phi^\sharp} & \ \ar[l]
	}}
	The horizontal maps provide us to compute  $H^2(\widehat{Q},(\mathbb{Z}/t\mathbb{Z})^2)$ while the vertical ones induces the homomorphism between cohomology groups. Now the cochain maps are like
	\begin{align*}
		\phi^\sharp(y^*_0)&=\kappa y^*_0\\
		\phi^\sharp(y^*_i)&=\mu_i y^*_0+k_iy^*_i\\
		d^2(\bar{x}^*_i)&=y^*_0+m_iy^*_i\\
		d^2(\bar{u}^*_0)&=d^2(\bar{v}^*_0)=0
	\end{align*}
	Hence for any 2-cocycle $\xi=(a,b)y^*_0-\sum(a_i,b_i)y^*_i$ associated to one Seifert 4-manifold, we have 
	\begin{align*}
		\phi^*([\xi])=\left[\phi^\sharp(\xi)\right]=&[(\kappa(a,b)-\sum\mu_i(a_i,b_i))y^*_0-\sum k_i(a_i,b_i)y^*_i]\\
		=&[\kappa((a,b)y^*_0-\sum (a_i,b_i)y^*_i)-\sum \mu_i(a_i,b_i)(y^*_0+m_iy^*_i)]\\
		=&\kappa[\xi]
	\end{align*}
\end{proof}

Now it's sufficient to prove the condition for two  $\mathbb{H}^2\times\mathbb{E}^2$-manifolds to have isomorphic  profinite completions if and only if their Seifert invariants are $(m_i,a_i,b_i)$ and $(m_i,ka_i,kb_i)$ where $k\in\mathbb{Z}$ is some integer relevant to above profinite integer.

\begin{proof}[\textbf{Proof of Theorem {\rm \ref{Seifert}}}. ]
	By the discussion above, the fundamental groups of  $M$ and $N$ are corresponding to 2-cocycles $\xi,\xi'\in H^2(Q,\mathbb{Z}^2)$. Suppose the Seifert invariants of $M$ are $((m_i,a_i,b_i),(a,b))$, then the lift of $\xi$ in $\Hom_{\mathbb{Z}B}(C_2,\mathbb{Z}^2)$ is 
	$$\xi=[(a,b)y^*_0-\sum (a_i,b_i)y^*_i],$$
	which is similar to $N$. And $\xi,\xi'$ correspond to $\xi_{i,n}\in H^2(\widehat{Q},(\mathbb{Z}/t\mathbb{Z})^2)$. We use  $\phi:\widehat{\pi_1(M)}\to\widehat{\pi_1(N)}$ to denote the isomorphism, then there is a series of surjections $\Aut(\widehat{\mathbb{Z}}^2)\to \SL(2,\mathbb{Z}/t\mathbb{Z})$ such that each $\phi|_{\widehat{\mathbb{Z}}}$ is associated to a series of matrices $\{A_t\in \SL(2,\mathbb{Z}/t\mathbb{Z})\}$. Then $\phi|_{\widehat{\mathbb{Z}}}$ and $\bar{\phi}:\widehat{Q}\to\widehat{Q}$ induce the action on cohomology  such that there exists $\kappa\in\mathbb{Z}$ satisfying $A_t\cdot\kappa\xi_{t,1}=\xi_{t,2}$. 
	
	For the manifolds with $\widetilde{\SL_2}\times\mathbb{E}-$geometry, we could define a group homomorphism to connect 2-cocycle with Euler number. Let $s$ is largely enough such that $t=se\prod m_i$, then we define $E:H^2(\hat{Q},(\mathbb{Z}\slash t\mathbb{Z})^2)\to(\mathbb{Z}\slash t\mathbb{Z})^2$ such that $$E((p,q)y^*_0-\sum (p_i,q_i)y^*_i)=(p,q)\prod m_i-\Sigma(p_i,q_i)\prod_{j\neq i}m_j.$$ It is easy to see that $E(\kappa \xi)=\kappa E(\xi)$ and also commute with action of $\Aut(\widehat{\mathbb{Z}}^2)$. By the former discussion, we could always find generating sets of $\pi_1(M)$ and $\pi_1(N)$ to make sure $e(M)=e(N)=(0,c)\in\mathbb{R}$, and $E(\xi_{t,2})=(0,c) \prod m_i\mod t$. Then we make alternation about the generators $(l,h)$ of $\sqrt{\pi_1(M)}$ such that it is generated by $\bar{\phi}^{-1}\cdot(l,h)$. Now $E(A_t\cdot\kappa\xi_{t,1})=A_tA_t^{-1}\cdot\kappa (0,c)\cdot \prod m_i=(0,c)\cdot \prod m_i=E(\xi_{t,2}) \mod t$. Then $\kappa-1\equiv0 \mod s$  holds for any $s$ if and only if $\kappa=1$ leading to the profinite rigidity.

	Actually we could always choose generators of fiber in $\pi_1(N)$ to diminish $A_t$. Now $\phi|_{\widehat{\mathbb{Z}}}$ could be formed to identity since  $\Aut(\widehat{\mathbb{Z}}^2)$ is residually finite. Then the chain maps satisfy $\kappa\xi_{t,1}=\xi_{t,2}$ for any  $t\in\mathbb{N}$.Take $t=\prod m_i$, then $\kappa\xi_{t,1}=\xi_{t,2}$ in  $H^2(\widehat{Q},(\mathbb{Z}/t\mathbb{Z})^2)$. Let $k\in\mathbb{Z}$ is the integer such that $k\equiv \kappa\ \modd \prod m_i $. Then the Seifert invariant of $M_2$ have to be $(m_i,ka_i,kb_i)$.
\end{proof}

There are still many types of Seifert 4-manifolds not solved about profinite rigidity, since the monodromies of fiber are not trivial leading to the non-trivial module structure of $\pi_1-$fiber which is also the next period of our work.

\noindent\textbf{Data availability statement}

This manuscript has no associated data.

\bibliographystyle{amsplain}

\end{document}